    \newtheorem{definition}{Definition}[section]
    \newtheorem{theorem}{Theorem}[section]
    \newtheorem{proposition}[theorem]{Proposition}
    \newtheorem{lemma}[theorem]{Lemma}
    \newtheorem{corollary}[theorem]{Corollary}
    \newtheorem{example}{Example}[section]
    \newtheorem{remark}[example]{Remark}
\newcommand{\bD}{\mathbb{D}}
\newcommand{\bR}{\mathbb{R}}
\newcommand{\bZ}{\mathbb{Z}}
\begin{document}

\title{Existence of Generating Families on Lagrangian Cobordisms}
\date{}
\author{Wenyuan Li}
\address{Department of Mathematics, Northwestern University.}
\email{wenyuanli2023@u.northwestern.edu}
\maketitle

\begin{abstract}
    For an embedded exact Lagrangian cobordism between Legendrian submanifolds in the 1-jet bundle, we prove that a generating family linear at infinity on the Legendrian at the negative end extends to a generating family linear at infinity on the Lagrangian cobordism after stabilization if and only if the formal obstructions vanish. In particular, a Lagrangian filling with trivial stable Lagrangian Gauss map admits a generating family linear at infinity.
\end{abstract}

\section{Introduction}

    Generating families, as a generalization of defining functions of graphical Lagrangians and Legendrians in cotangent bundles and 1-jet bundles \cite{Hormander,WeinLecture,GuiStern}, have been an important tool in the studies of exact Lagrangian submanifolds in cotangent bundles and respectively Legendrian submanifolds in 1-jet bundles \cite{Sikorav,ViterboGen,GroEliashGF,TrayGen,JordanTraynor}. In particular, generating families have been used in the studies of Lagrangian submanifolds in the symplectization of $J^1M$ \cite{ChekanovGF,Chaperon,GroEliashGF}. Sabloff-Traynor \cite{Genfamily} and Bourgeois-Sabloff-Traynor \cite{BST-GF} have used generating family techniques in the study of Lagrangian cobordisms between Legendrian submanifolds.

    However, it is in general very difficult to construct generating families for an arbitrary Lagrangian or Legendrian. To the best of our knowledge, the Lagrangians/Legendrians we know that generating families exist globally for systematic reasons are the ones that are Hamiltonian isotopic to the zero section \cite{Sikorav,ViterboGen,TheretViterbo,ChekanovGF,GroEliashGF} or (sub)graphical Lagrangians \cite{GroEliashGF}, nearby Lagrangians in $T^*M$ with trivial stable Lagrangian Gauss maps \cite{TwistGF} and arbitrary nearby Lagrangians in $T^*S^n$ \cite{KraghR2n,TwistGF}. There have also been constructions for certain immersed Lagrangian fillings of Legendrians \cite{BST-GF,Pezzimenti}, Lagrangian traces of Legendrian isotopies and certain Lagrangian handle attachments on Legendrians \cite{BST-GF}.

    In this note, we consider generating families on Lagrangian cobordisms between Legendrian submanifolds. This is a non-symmetric relation between Legendrian submanifolds \cite{Chantraine,ChantraineNonsym}, and results in relative symplectic field theory \cite{SFT} imply that Lagrangian cobordisms induce maps of the Legendrian contact homologies from the positive end to the negative end \cite{Ekcobordism,EHK}, and functors between augmentation categories from the negative end to the positive end. Since in low dimensions generating families induce augmentations by using rulings \cite{ChekPushkar,FuchsRuling,FuchsIshkhanovRuling,SabRuling,FuchsRutherGF} and Morse complex sequences \cite{HR-Ruling,RutherSullivanGF}, one would expect that a generating family on the Legendrian at the negative end extends to the positive end.

    Our main theorem states that generating families on the Legendrian at the negative end indeed extends to the Lagrangian cobordism and hence the positive end, once the necessary formal obstruction vanishes.

\begin{theorem}
    Let $L \hookrightarrow J^1M \times \bR_{>0} \xrightarrow{\sim} T^*(M \times \bR_{>0})$ be an exact Lagrangian cobordism between Legendrians in $J^1M$ from $\Lambda_-$ to $\Lambda_+$. Let $f_-$ be a generating family for $\Lambda_-$ linear at infinity. Then $f_-$ extends to a generating family for $L$ linear at infinity up to stabilizations if and only if the stable Lagrangian Gauss map $G: L \rightarrow U/O$ is null homotopic and the null homotopy on $\Lambda_-$ determined by $f_-$ extends to a null homotopy on $L$. In particular, when the above conditions hold, $\Lambda_+$ admits a generating family linear at infinity.
\end{theorem}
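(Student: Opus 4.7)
The strategy is to reduce the extension problem to a homotopy-theoretic question by exploiting the correspondence between stable equivalence classes of generating families linear at infinity on a Lagrangian and null-homotopies of its stable Lagrangian Gauss map $G: L \to U/O$, as developed in \cite{TwistGF}. The necessity of the formal condition is immediate: any generating family linear at infinity on $L$ induces, via its fiberwise Hessian signature and the associated stable Lagrangian trivialization, a null-homotopy of $G$ whose restriction to $\Lambda_-$ is precisely the one determined by the restricted generating family, which is $f_-$ up to stabilization.

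For sufficiency, I would follow a two-step \emph{twist-then-untwist} scheme. First, I would construct a twisted generating family $\widetilde F$ on $L$ whose restriction at the negative end agrees with the obvious twisted structure on $\Lambda_-$ induced by $f_-$. The existence of such twisted generating families is insensitive to the stable Gauss map, so this step is formal: decompose $L$ as a composition of elementary Lagrangian cobordisms — Lagrangian traces of Legendrian isotopies, minimum-type cobordisms, and Lagrangian handle attachments — by choosing a generic Morse function on the $\bR_{>0}$-coordinate, and then extend inductively using the elementary-piece constructions of \cite{BST-GF} applied in the twisted setting. The boundary condition at $\Lambda_-$ is preserved throughout, since each elementary construction is local and can be arranged to be cylindrical away from the relevant handle.

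Second, the hypothesized null-homotopy of $G$ on $L$, compatible with the one on $\Lambda_-$ determined by $f_-$, is exactly the data needed to untwist $\widetilde F$ into a genuine generating family $F$ linear at infinity, up to stabilization. I would invoke the untwisting procedure of \cite{TwistGF}, performed \emph{relative} to the negative end using the given relative null-homotopy, so that the resulting $F$ restricts to a stabilization of $f_-$ on the cylindrical end. Restricting $F$ to the positive end then produces a generating family for $\Lambda_+$ linear at infinity, yielding the final claim.

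The main obstacle, in my view, is carrying out both steps \emph{relative} to the prescribed boundary data $f_-$, rather than merely producing some generating family on $L$. Both the twisted extension and the untwisting are naturally global constructions on $L$, and matching them with a preassigned $f_-$ on $\Lambda_-$ — including the stabilization class of $f_-$ and its linear-at-infinity behavior — requires a parametric and relative version of each step. Technically, I expect this to rest on a parametric $h$-principle for twisted generating families over cobordisms with prescribed ends, together with enough fiberwise quadratic stabilization to absorb the mismatch between the fiber dimensions produced by the elementary-piece constructions and those of $f_-$; the null-homotopy of $G$ extending the one on $\Lambda_-$ is precisely what makes this parametric problem solvable.
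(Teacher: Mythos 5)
There is a genuine gap in the first step of your sufficiency argument. You propose to ``decompose $L$ as a composition of elementary Lagrangian cobordisms \dots by choosing a generic Morse function on the $\bR_{>0}$-coordinate.'' A generic Morse function on $s|_L$ gives a \emph{smooth} handle decomposition of $L$, but its regular level sets are not Legendrian and the intermediate pieces are not cylindrical over Legendrians, so this does not exhibit $L$ as a concatenation of traces of isotopies, minimum cobordisms, and Lagrangian handle attachments. Whether an arbitrary exact Lagrangian cobordism is ``decomposable'' in this sense is a well-known open problem, not a consequence of Morse theory; the elementary-piece constructions of \cite{BST-GF} therefore cannot be applied inductively to a general $L$. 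Since this decomposition is the engine of your entire extension step (twisted or not), the argument does not get off the ground for the cobordisms the theorem is about. Relatedly, even if one had the decomposition, the ``untwisting'' you invoke from \cite{TwistGF} is not a formal relative homotopy-theoretic step: in that paper it is implemented by the doubling construction, whose proof uses a Weinstein neighbourhood of uniformly positive radius and hence compactness of the Lagrangian. The central difficulty of the present theorem is precisely that this fails for a noncompact cobordism, so deferring to that procedure ``performed relative to the negative end'' assumes the hardest part.

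For contrast, the paper takes a different and global route. The formal conditions are first converted, via Giroux and Latour, into the existence of a \emph{germ} of generating family on $L$ extending that of $f_-$ (this part matches your necessity discussion). The germ is then upgraded to a genuine generating family linear at infinity by: (i) building a generating family linear at infinity on the Reeb double $T_{-\epsilon}(L)\cup T_\epsilon(L)$, where the hypothesis that $f_-$ is linear at infinity at the negative end is exactly what controls the derivative estimates that fail for a general noncompact Legendrian; (ii) enlarging $\epsilon$ to arbitrary $h$ by combining the homotopy lifting property with the contact lift of the Liouville flow, which pushes the non-conical part toward the negative end where the two copies are already separated; and (iii) cutting the resulting generating family along a sublevel set separating the two front copies, showing the sublevel bundle is trivial by propagating the trivialization given by $f_-$ from the negative end. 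If you want to salvage your approach, you would need either a proof of decomposability (unavailable) or a replacement for step (i)--(iii) that works relative to $f_-$ without compactness, which is what the doubling-plus-Liouville-flow argument supplies.
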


\begin{remark}
    We expect that for any $\bD^k$-family of generating families linear at infinity for $\Lambda_-$, when the $\bD^k$-family of null homotopies of the Lagrangian Gauss map extends to $L$, the generating families will extend to a $\bD^k$-family of generating families linear at infinity for $L$. However, we will not work out the details and give a complete proof in this paper.
\end{remark}

    Generating families have been used in many previous literature to study Lagrangian cobordisms between Legendrians \cite{Genfamily,SabSullivanGF,BST-GF,Pezzimenti,Limouzineau}. However, to our knowledge, we need to assume the existence of a compatible generating family on the Lagrangian cobordism. With the theorem above, one can replace those existence assumptions by the purely formal conditions on the Lagrangian Gauss map and the classifying map of generating families.

    The first corollary is that for exact Lagrangian concordances, a generating family linear at infinity on Legendrian at the negative end always extends to the Lagrangian concordance.

\begin{corollary}
    Let $L \hookrightarrow J^1M \times \bR_{>0} \xrightarrow{\sim} T^*(M \times \bR_{>0})$ be an exact Lagrangian concordance between closed Legendrians from $\Lambda_-$ to $\Lambda_+ \hookrightarrow J^1M$. Then $L$ admits a generating family linear at infinity if and only if $\Lambda_-$ admits a generating family linear at infinity.
\end{corollary}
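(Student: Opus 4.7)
The plan is to deduce the corollary directly from the main theorem by exploiting the fact that a Lagrangian concordance is topologically a cylinder. The ``only if'' direction is routine: near the negative end $L$ is cylindrical over $\Lambda_-$, so restricting a generating family $f$ on $L$ to a slice $\Lambda_- \times \{t_0\}$ with $t_0$ small produces a generating family for $\Lambda_-$, and linearity at infinity is preserved since the restriction does not touch the fiber variables.

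For the ``if'' direction, I would apply the main theorem to the given $f_-$ on $\Lambda_-$. This requires checking the two formal conditions: (i) the stable Lagrangian Gauss map $G : L \to U/O$ is null-homotopic, and (ii) the null-homotopy on $\Lambda_-$ determined by $f_-$ extends to $L$. The key observation is that, by the very definition of concordance, $L$ is diffeomorphic to $\Lambda_- \times \bR$, so the inclusion $\iota : \Lambda_- \hookrightarrow L$ is a homotopy equivalence.

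Since $f_-$ is linear at infinity, it determines a null-homotopy $H_-$ of the stable Lagrangian Gauss map $G_- : \Lambda_- \to U/O$. At the negative cylindrical end the Lagrangian tangent planes of $L$ agree with those of $\Lambda_- \times \bR$, so under the obvious trivialization $\iota^* G \simeq G_-$. Because $\iota$ is a homotopy equivalence, condition (i) follows from the corresponding statement for $G_-$, and condition (ii) follows from the homotopy extension property along $\iota$. The main theorem then yields a generating family linear at infinity on $L$ extending $f_-$ after stabilization, and restricting further to the positive end produces a generating family linear at infinity on $\Lambda_+$.

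There is essentially no conceptual obstacle here: the entire difficulty of the general cobordism case is absorbed into the two formal conditions of the main theorem, and both trivialize for a concordance via the homotopy equivalence $\iota$. The only point requiring care is the precise identification $\iota^* G \simeq G_-$, which amounts to the standard observation that at the negative cylindrical end the tangent distribution of $L$ agrees with the natural pullback from $\Lambda_-$, leaving no room for ambiguity in the null-homotopy.
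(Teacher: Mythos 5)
Your proposal is correct and is exactly the intended deduction: for a concordance the inclusion of the negative conical end $\Lambda_-\hookrightarrow L$ is a homotopy equivalence along which the restricted Gauss map of $L$ is the stabilization of $G_-$, so both formal conditions of the main theorem are automatic, and the converse is the routine restriction to a slice (after rescaling by $s_0$ and subtracting $w_{0,-}$). The only point stated slightly loosely is that extending the null homotopy uses the homotopy equivalence (restriction induces a bijection on homotopy classes of null homotopies), not merely the homotopy extension property, but this is the standard argument and the paper offers no more detailed proof of this corollary.
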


    The second corollary that may be of particular interest is the case of Lagrangian fillings. Our result implies that any exact Lagrangian filling with trivial stable Lagrangian Gauss map admits a generating family linear at infinity.

\begin{corollary}
    Let $L \hookrightarrow J^1M \times \bR_{>0} \xrightarrow{\sim} T^*(M \times \bR_{>0})$ be an exact Lagrangian filling of $\Lambda \hookrightarrow J^1M$. Then $L$ admits a generating family linear at infinity if and only if the stable Lagrangian Gauss map $G: L \rightarrow U/O$ is trivial. In particular, when $\Lambda$ has a Lagrangian filling $L$ with trivial Lagrangian Gauss map, $\Lambda$ admits a generating family linear at infinity.
\end{corollary}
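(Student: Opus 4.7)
The plan is to reduce the corollary to the main theorem by specializing to $\Lambda_- = \varnothing$. A Lagrangian filling is, by definition, an exact Lagrangian cobordism from the empty Legendrian to $\Lambda$, so the main theorem applies with $\Lambda_- = \varnothing$ and $\Lambda_+ = \Lambda$. When $\Lambda_-$ is empty, the generating family $f_-$ and the null homotopy of the Gauss map it determines are vacuously given and carry no data, so the extension condition at the negative end imposes no constraint. Consequently the formal obstruction in the main theorem collapses to the single statement that the stable Lagrangian Gauss map $G : L \to U/O$ be null homotopic.

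For the ``if'' direction I simply invoke the main theorem with the empty data at $\Lambda_-$: the hypothesis that $G$ is null homotopic is precisely what is needed, and the conclusion is that $L$ admits a generating family linear at infinity after stabilization. The final sentence of the main theorem, specialized to this setting, then asserts that $\Lambda_+ = \Lambda$ itself admits a generating family linear at infinity, which proves the last sentence of the corollary.

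For the ``only if'' direction I would argue directly that any generating family linear at infinity on $L$ produces a null homotopy of the stable Lagrangian Gauss map. If $L$ is generated by $f : M \times \bR_{>0} \times \bR^N \to \bR$, then at each point of $L$ the tangent space $T_pL$ is computed from the Hessian of $f$ along the fiber critical set, and a short linear-algebra computation comparing $T_pL$ to the conormal of the base in a suitably stabilized cotangent bundle shows that $T_pL$ is canonically homotopic through Lagrangian subspaces to the vertical fiber. This provides the desired null homotopy of $G$ after one stabilization, and is the standard argument that generating families trivialize the stable Gauss map.

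The only delicate point in this program is the degenerate bookkeeping of the main theorem when $\Lambda_- = \varnothing$: one must verify that the phrase ``the null homotopy on $\Lambda_-$ determined by $f_-$ extends to a null homotopy on $L$'' is indeed empty content when $\Lambda_-$ has no points, rather than hiding a parametric obstruction. Granting this, the corollary follows formally from the main theorem together with the standard construction of a stable Lagrangian trivialization from a generating family linear at infinity.
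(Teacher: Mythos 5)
Your proposal is correct and takes essentially the same route as the paper: the corollary is just the main theorem specialized to $\Lambda_- = \varnothing$, where the data of $f_-$ and its null homotopy are vacuous, so the only remaining obstruction is the triviality of $G: L \to U/O$. For the ``only if'' direction your linear-algebra sketch is fine but unnecessary, since it is exactly Giroux's theorem (Theorem~\ref{thm:Giroux}), which the paper already cites and which applies verbatim because a generating family linear at infinity is in particular a germ of generating family.
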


    We explain the concepts involved in the main results, starting from the definition of generating families for Lagrangian and Legendrian submanifolds.

\begin{definition}\label{def:linear}
    Let $\Lambda \hookrightarrow T^*M$ be an exact Lagrangian embedding. Then $f: M \times \bR^n \rightarrow \bR$ is called a generating family for $\Lambda$ if the graph of the differential $\Lambda_{df} \pitchfork T^*M \times \bR^n \times \{0\}$ and
    \begin{align*}
    \Lambda = \pi_{T^*M} (\Lambda_{df} \cap T^*M \times \bR^n \times \{0\}) = \{(x, \xi) \mid \exists\,u \in \bR^n, \partial_x f(x, u) = \xi, \partial_u f(x, u) = 0\}.
    \end{align*}
    Let $\Lambda \hookrightarrow J^1M$ be a Legendrian embedding. Then $f: M \times \bR^n \rightarrow \bR$ is called a generating family for $\Lambda$ if the graph of the 1-jet $\Lambda_{j^1f} \pitchfork J^1M \times \bR^{n} \times \{0\}$ and
    \begin{equation*}
    \resizebox{\textwidth}{!}{$\Lambda = \pi_{J^1M} (\Lambda_{j^1f} \cap J^1M \times \bR^{n} \times \{0\}) = \{(x, \xi, z) \mid \exists\,u \in \bR^n, \partial_x f(x, u) = \xi, \partial_u f(x, u) = 0, f(x, u) = z\}.$}
    \end{equation*}
    $f$ is a generating function linear at infinity if $f(x, -): \bR^n \rightarrow \bR$ is a linear function for every $x \in M$ outside a proper subset with respect to the projection onto $M$.
\end{definition}

    Given a generating family, we can stabilize it by introducing extra dimensions and adding quadratic forms on the extra factors. Let $f: M \times \bR^n \rightarrow \bR$ be a generating family. Then the stabilization of $f$ is the generating function
    $$\overline{f}: M \times \bR^n \times \bR^m \rightarrow \bR, \;\; \overline{f}(x, u_0, u_1) = f(x, u_0) + Q(u_1)$$
    for a quadratic form $Q: \bR^m \rightarrow \bR$. We will prove that after deforming the functions outside compact subsets, stabilizations preserve linear at infinity property.

    Then we explain the formal obstructions that appear in the statement of our theorems. The first obstruction is to trivialize the stable Lagrangian Gauss map of $\Lambda$. Consider the Lagrangian fibration of $T^*M$ by cotangent fibers. We will identify the Lagrangian Grassmannian over $T^*M$ with the trivial bundle $U/O \times T^*M \to T^*M$.

\begin{definition}
    Let $\Lambda \rightarrow T^*M$ be an exact Lagrangian immersion. Then the stable Lagrangian Gauss map is defined by the section
    $$G: \Lambda \rightarrow U/O, \;\; p \mapsto [T_p\Lambda].$$
\end{definition}

    It is shown by Giroux \cite{GirouxGF} and Latour \cite{LatourGF} that the necessary and sufficient condition of the existence of a germ of generating family on an exact Lagrangian immersion $\Lambda$ is that the stable Lagrangian Gauss map is trivial, and each generating family $f$ determines a null homotopy of the Lagrangian Gauss map $G: \Lambda \to U/O$. Then the second obstruction is to extend the null homotopy on $\Lambda$ to a null homotopy on $L$.

    When concatenating two null homotopies of the stable Lagrangian Gauss map, we will obtain a based loop $\Lambda \to \Omega(U/O)$, where $\Omega(U/O) \simeq \bZ \times BO$ by Bott periodicity. Thus we can characterize the null homotopy through the following construction, where we identify a subspace $V \subset \bR^n$ of codimension $r$ as a point $(r, [V]) \in \bZ \times BO$.

\begin{definition}\label{def:classify-intro}
    Let $\Lambda \rightarrow T^*M$ be an exact Lagrangian immersion generated by a given reference generating family $g: M \times \bR^n \rightarrow \bR$. Then the classifying map of any generating family $f$ is defined as a map between pointed spaces by
    \begin{align*}
    \Delta(f): \Lambda \rightarrow \bZ \times BO, \; (x, \partial_x f(x, u)) \mapsto [V_{<0}f(x, u) \oplus V_{\geq 0}g(x, u)],
    \end{align*}
    where $V_{<0}f$ and $V_{\geq 0}f$ are the negative and non-negative eigenspace of the Hessian $\partial^2_u f$.
\end{definition}
\begin{remark}
    Latour showed that $V_{<0}f(x, u) \oplus V_{\geq 0}g(x, u)$ defines a vector bundle over $\Lambda$, which is not obvious from the construction. For any $(x, u) \in M \times \bR^n$ that projects onto $\Lambda$, the value of $\Delta(f_-)$ at the point can also be written as $[V_{<0}f(x, u)] - [V_{< 0}g(x, u)]$.
\end{remark}
\begin{remark}
    Since $\Delta$ is a map between pointed spaces,  we require that the base point is sent to $0\in \bZ \times BO$. When $\Lambda$ is connected, the image of $\Delta(f)$ is contained in $0 \times \mathrm{Gr}(\bR^\infty, \bR^\infty)$.
\end{remark}

    The necessary condition for one to extend a germ of generating family is that the corresponding classifying map extends. It is proved by Latour \cite{LatourGF} that the germs of generating families are indeed classified by $[\Lambda, \Omega(U/O)] \simeq [\Lambda, \bZ \times BO]$ up to stabilizations. Hence this is also the sufficient condition to extend germs of generating families.

\begin{remark}
    We note that the first few formal obstructions are well known invariants for Lagrangians and Legendrians. The first obstruction for the Lagrangian Gauss map to vanish is the Maslov class $\mu(\Lambda) \in H^1(\Lambda; \bZ)$ and the first order part of the classifying map from $L$ to $\bZ$ is the choice of a Maslov potential on all connected components. We will explain in Appendix \ref{sec:knot} that for Legendrian links, these are the only obstructions.
\end{remark}

    Knowing the result of Giroux \cite{GirouxGF} and Latour \cite{LatourGF}, one can easily notice that the main difficulty is to extend the germ of generating family to a generating family linear at infinity on $M \times \bR^N$. We follow the idea in the recent work of Abouzaid-Courte-Guillermou-Kragh \cite{TwistGF} inspired by works in microlocal sheaves of Guillermou \cite{Gui}. Given a germ of generating families on $\Lambda$, one constructs a generating family linear at infinity on the double copy given by Reeb push-off $T_{-h}(\Lambda) \cup T_h(\Lambda)$, and then separates the two copies $T_{-h}(\Lambda)$ and $T_h(\Lambda)$.

    For exact Lagrangian cobordisms $L$ between Legendrians, we do not always obtain a generating family linear at infinity on the double copy $T_{-h}(L) \cup T_h(L)$ due to lack of a Weinstein tubular neighbourhood of $L$ with positive radius. However, once there is a generating family linear at infinity at the negative end $\Lambda_-$, we will be able to carry out the rest of the doubling construction. This is inspired by our previous work in microlocal sheaves for Lagrangian cobordisms \cite{LiCobordism2} (though we need to emphasize that mathematically neither result implies the other).

\begin{remark}
    Like the construction of Abouzaid-Courte-Guillermou-Kragh \cite{TwistGF}, we believe that our construction also works for twisted generating families. However, in the case of nearby Lagrangians considered in their paper, twisted generating families produce interesting restrictions of the Lagrangian. It seems yet unclear what one can obtain by using twisted generating families on Lagrangian cobordisms.
\end{remark}

    Let us remark that our result should potentially generalize to generating families that are not linear at infinity. However, in those cases there might be extra difficulties when cutting off the generating families in Section \ref{sec:cutoff}.

\subsection*{Acknowledgement}
    We would like to thank Emmy Murphy and Eric Zaslow for helpful discussions on the project. We would also like to thank Daniel Alvarez-Gavela for helpful discussions and St\'ephane Guillermou, Thomas Kragh, Joshua Sabloff and Lisa Traynor for their interest in this project. We thank Joshua Sabloff for reading a earlier version of the draft and providing helpful feedbacks, thank Sylvain Courte for helpful discussions and comments on the draft, and thank the anonymous referee for carefully reading the paper and providing detailed feedback throughout. After the manuscript is posted, we learned that Sylvain and St\'ephane are aware of this result as well.

\section{Proof of the Theorem}

\subsection{Germs of generating families}
    The problem of constructing germs of generating families turns out to be purely homotopy theoretic. In this section, we explain why the formal condition ensures the existence of a germ of generating family.

\begin{definition}
    Let $\iota: L \rightarrow J^1M$ be a Legendrian immersion. Then a germ of generating function is a pair $(U, f)$ where $U \subseteq M \times \bR^n$ is an open subset $f: U \rightarrow \bR$ is a smooth function such that
    $$\Lambda_{j^1f} \pitchfork J^1M \times \bR^n \times \{0\}.$$
    Then $(U, f)$ is a germ of generating family for $L$ if for $\Sigma_f = \Lambda_{j^1f} \cap J^1M \times \bR^n \times \{0\}$, there is a diffeomorphism $\varphi: L \xrightarrow{\sim} \Sigma_f$ such that $\iota = \pi_{J^1M} \circ \varphi$, and in particular
    \begin{align*}
    \iota(L) \cong \pi_{J^1M} (\Sigma_f) = \{(x, \partial_x f(x, u), f(x, u)) \mid \exists\, u \in \bR^k, \partial_u f(x, u) = 0, (x, u) \in U\}.
    \end{align*}
\end{definition}
\begin{remark}
    When $L \hookrightarrow J^1M$ is a Legendrian embedding, we will omit the obvious choice of the diffeomorphism $\varphi: L \xrightarrow{\sim} \Sigma_f$.
\end{remark}

    Denote the Legendrian embedding by $\varphi: L \hookrightarrow J^1M$. Consider a smooth map $\psi: L \rightarrow \bR^n$ such that $\pi \times \psi: L \hookrightarrow M \times \bR^n$ is an embedding and $U$ is an open neighbourhood of $(\pi \times \psi)(L) \subseteq M \times \bR^n$. A germ of generating family for $L$ is equivalent to a germ of graphical Legendrian embedding
    $$\theta_U: U \hookrightarrow J^1(M \times \bR^k)$$
    that transversely intersects the coisotropic $J^1(M) \times \bR^k$ along the isotropic embedding 
    $$\varphi \times d\psi: L \hookrightarrow J^1(M \times \bR^n).$$ 
    such that $\theta_U|_L = \varphi \times d\psi$. One can further show that this is equivalent to a Lagrangian distribution that contains the tangent bundle of the isotropic embedding, which then becomes a homotopy theoretic problem.

\begin{theorem}[Giroux \cite{GirouxGF}]\label{thm:Giroux}
    Let $L \rightarrow J^1M$ be a Legendrian immersion. Then $L$ has a germ of generating family if and only if the stable Lagrangian Gauss map $L \rightarrow U/O$ is homotopically trivial.
\end{theorem}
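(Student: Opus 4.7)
The plan is to reduce the existence of a germ of generating family to a homotopy-theoretic problem about Lagrangian distributions along $L$, solve it using the hypothesis on the stable Lagrangian Gauss map, and then integrate the formal data back to a germ via a Weinstein--Darboux type argument.

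\textbf{Reformulation.} Following the discussion in the excerpt, I would first fix an auxiliary smooth map $\psi: L \to \bR^n$ so that $\pi \times \psi: L \hookrightarrow M \times \bR^n$ is an embedding (passing to a small tubular neighborhood in $L$ if $L$ is only immersed). A germ of generating family for $L$ amounts to extending the isotropic embedding $\varphi \times d\psi: L \to J^1(M \times \bR^n)$ to a germ of graphical Legendrian $\theta_U$ that is transverse along $L$ to the coisotropic $J^1M \times \bR^n$. By the isotropic neighborhood theorem, such a germ exists precisely when there is a Lagrangian subbundle $\Xi \subset TJ^1(M \times \bR^n)|_{(\varphi \times d\psi)(L)}$ that contains the tangent bundle of $(\varphi \times d\psi)(L)$ and is transverse to the vertical jet distribution; a parametric Moser argument relative to $L$ then produces $\theta_U = \Lambda_{j^1 f}$ for a unique germ $f: U \to \bR$.

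\textbf{Obstruction and converse.} The space of Lagrangians in a symplectic vector space $V$ containing a fixed isotropic $I \subset V$ is homotopy equivalent to the Lagrangian Grassmannian of the symplectic reduction $I^\omega/I$, and the required transversality to the jet direction is an open condition which becomes nonempty after stabilization. Assembling these fibers along $L$ and passing to the stable limit as $n$ grows, the primary obstruction to finding such a section $\Xi$ is exactly the stable Lagrangian Gauss map $G: L \to U/O$, so the hypothesis $[G] = 0$ produces $\Xi$ for $n$ sufficiently large. Conversely, if a germ $(U, f)$ is given, then $T\Lambda_{j^1 f}|_L$ furnishes a distribution $\Xi$ with the desired properties, and the Hessian splitting $V_{<0}(\partial^2_u f) \oplus V_{\geq 0}(\partial^2_u f)$ along $\Sigma_f$ supplies an explicit null-homotopy of $G$, giving $[G] = 0$.

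\textbf{Main obstacle.} The substantive step will be the reformulation: making rigorous the parametric Moser-type argument that integrates a formal Lagrangian distribution $\Xi$ into a genuine germ of graphical Legendrian while keeping the transversality with the coisotropic $J^1M \times \bR^n$ under control as one varies along $L$. One has to check that the auxiliary direction $\bR^n$ can be chosen compatibly with $\Xi$ in a neighborhood of $(\pi \times \psi)(L)$, rather than only pointwise. Once this reduction is in hand, the remaining content is standard obstruction theory for sections of a bundle with fiber $U/O$, which is controlled precisely by the stable Lagrangian Gauss map.
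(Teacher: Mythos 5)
Your proposal follows essentially the same route the paper sketches immediately before the statement: reduce a germ of generating family to a germ of graphical Legendrian $\theta_U$ extending the isotropic embedding $\varphi \times d\psi$ transversely to the coisotropic $J^1M \times \bR^n$, then to a Lagrangian distribution containing the tangent bundle of the isotropic, and finally to obstruction theory in which the stable Gauss map $G: L \to U/O$ is the only obstruction after stabilization. The paper does not prove this theorem itself but cites Giroux for it, and your outline matches that standard argument, with the one step you reasonably leave asserted being the identification of the primary obstruction for the bundle of compatible Lagrangian planes with $[G] \in [L, U/O]$ in the stable range.
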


    Then we define equivalences of germs of generating families following \cite[Definition~IV.1.4 \& 6]{LatourGF} in order to state the classification theorem of Latour.

\begin{definition}
    Let $L \rightarrow J^1M$ be a Legendrian immersion, and $(U, f)$, $(U', f')$ be germs of generating families for the Legendrian $L$. Then $(U, f)$ and $(U', f')$ are called strictly equivalent if there are open subsets $V \subseteq U$, $V' \subseteq U'$, and a diffeomorphism $H: V \xrightarrow{\sim} V'$ such that $H(\Sigma_{f}) = \Sigma_{f'}$, $\pi_M \circ H = \pi_M|_{V}$, and
    $$f'|_{V'} = f \circ H.$$
    $(U, f)$ and $(U', f)$ are called stably equivalent if for some stabilizations
    \begin{align*}
    \overline{f}: U \times \bR^m \rightarrow \bR, &\;\, \overline{f}(x, u_1, u_2) = f(x, u_1) + Q(u_2),\\
    \overline{f}{}': U' \times \bR^{m'} \rightarrow \bR, &\;\, \overline{f}{}'(x, u_1, u_2) = f'(x, u_1) + Q'(u_2)
    \end{align*}
    with nondegenerate quadratic forms $Q, Q': \bR^m \rightarrow \bR$, there is a strict equivalence between $(U \times \bR^m, \overline{f})$ and $(U' \times \bR^{m'}, \overline{f}{}')$.
\end{definition}

    We now explain the correspondence between the germ of generating families and the homotopy classes $[L, \Omega(U/O)]$. Using Bott periodicity, we know that
    $$\Omega(U/O) \simeq \bZ \times BO \simeq \bZ \times \mathrm{Gr}(\bR^\infty, \bR^\infty).$$
    For any subspace $V \subset \bR^n$ of codimension $r$, we will regard it as a point $(r, [V]) \in \bZ \times BO$. In the proof of Giroux's theorem, we deformed the Lagrangian Gauss map such that it is tangent to the isotropic embedding $L \hookrightarrow J^1(M \times \bR^{n})$. However, the behaviour along the $\bR^n$-direction of the embedding $U \hookrightarrow J^1(M \times \bR^{n})$ may differ, and we can obtain a classification result of the germs of generating families by the behaviour along $\bR^n$-direction.

    For a generating family $f$ for $L \hookrightarrow J^1M$, the negative eigenspace of $f$ on the fiberwise critical locus $\Sigma_f$ as a subspace in $\bR^n$. However, in general, the dimension of the negative eigenspace is not a constant and does not define a vector bundle or a continuous map $L \to BO$. Latour resolved the issue by considering the sum of the negative eigenspace of a generating family $f$ and the positive eigenspace of a reference generating family $g$, which turns out to be always a direct sum \cite[Proposition III 4.6 \& IV 2.4]{LatourGF}.

\begin{definition}
    Let $f$ and $g$ be germs of generating families for $L$ such that $\Sigma_f = \Sigma_g$. Let $V_{<0}f(x, u)$ and $V_{\geq 0}f(x, u)$ be the negative and non-negative eigenspace of the Hessian $\partial^2_u f(x, u)$ at $(x, u) \in \Sigma_f$. Denote by $(x_0, u_0) \in \Sigma_f$ the lift of the base point in $L$. We define the classifying map of $f$ with respect to the reference class $g$ as
    $$\Delta(f) = \Delta(f, g) = [V_{<0}f \oplus V_{\geq 0}g] - [V_{<0}f(x_0, u_0) \oplus V_{\geq 0}g(x_0, u_0)] \in [L, \bZ \times BO].$$
\end{definition}

\begin{theorem}[Latour \cite{LatourGF}*{Theorem IV.1.10}]\label{thm:Latour}
    Let $L \rightarrow J^1M$ be a Legendrian immersion such that the stable Lagrangian Gauss map $L \rightarrow U/O$ is homotopically trivial. Then the stable equivalence classes of generating families on $L$ are in bijection with homotopy classes of pointed spaces $[L, \Omega(U/O)]$.
\end{theorem}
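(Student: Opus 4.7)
The plan is to exhibit the map $f \mapsto \Delta(f)$ as a well-defined bijection between stable equivalence classes of germs of generating families and $[L, \bZ \times BO] \simeq [L, \Omega(U/O)]$, following the h-principle philosophy that already underlies Theorem \ref{thm:Giroux}. The geometric content is that a germ of generating family for $L$ is equivalent to a germ of graphical Legendrian lift $L \hookrightarrow J^1(M \times \bR^n)$ transverse to $J^1 M \times \bR^n \times \{0\}$, which in turn corresponds to the choice of a Lagrangian distribution along the isotropic image of $L$ that contains $TL$ and is transverse to the coisotropic fiber direction. Once Giroux's theorem trivializes the stable Lagrangian Gauss map, the remaining homotopical freedom in choosing such a distribution is classified by a map into the fiber of $\Omega(U/O) \to U/O$, which is precisely $\bZ \times BO$ by Bott periodicity.

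For well-definedness, strict equivalence preserves the conjugacy class of the Hessian $\partial^2_u f$ along $\Sigma_f$, hence preserves $[V_{<0} f \oplus V_{\geq 0} g]$, while a stabilization by a nondegenerate quadratic form $Q$ augments both $V_{<0} f$ and $V_{<0} g$ by the negative eigenspace of $Q$, leaving the class in $\bZ \times BO$ unchanged. The nontrivial point, flagged in the remark after Definition \ref{def:classify-intro}, is that $V_{<0} f \oplus V_{\geq 0} g$ is a continuous vector bundle even though its two summands individually vary in rank; one verifies this by using the transversality $\Lambda_{j^1 f} \pitchfork J^1 M \times \bR^n \times \{0\}$ (forcing directness of the sum) together with a continuous choice of complement to the common kernel. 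For surjectivity, given a class $[\phi] \in [L, \bZ \times BO]$, represent $\phi$ by a continuous family of finite-codimension subspaces $V(p) \subset \bR^N$ and modify the reference $g$ by a fiberwise Morse-theoretic surgery (after stabilizing $g$ to enlarge $N$) that flips the Hessian signature along the directions prescribed by $V(p)$; the resulting germ $f$ satisfies $\Delta(f) = [\phi]$ by construction.

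For injectivity, suppose $\Delta(f) = \Delta(f')$. After a common stabilization one arranges that $f, f'$ have equal fiber dimension and that $V_{<0} f$ and $V_{<0} f'$ are isomorphic as vector bundles over $L$. The homotopy between the two classifying maps must then be promoted to a smooth path $\{f_t\}$ of germs of generating families from $f$ to $f'$ through germs with the prescribed classifying data; this is an h-principle statement for the space of Lagrangian distributions with fixed homotopical invariants. Given such a path, a Moser-type argument along $\Sigma_{f_t}$ produces a one-parameter family of fiber-preserving diffeomorphisms $H_t$ with $f_t = f \circ H_t$, which assembles into the desired strict equivalence after stabilization.

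The main obstacle is the h-principle step in the injectivity half: one must show that the space of germs of generating families with given classifying map is path-connected relative to the formal data, i.e., that any homotopy of Lagrangian distributions respecting the classifying invariants lifts to an honest homotopy of generating families. This is the most delicate part of Latour's original analysis and is where one needs genuine geometric input beyond formal homotopy theory. Note that because we are working at the level of germs, the linear-at-infinity and properness issues that dominate the rest of the paper play no role here, isolating the difficulty to its purely local, h-principle core.
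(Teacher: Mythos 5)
Your outline matches the one the paper itself gives for this (cited) theorem of Latour: well-definedness of $\Delta$ via the direct-sum bundle $V_{<0}f\oplus V_{\geq 0}g$, reduction to a common domain after stabilization (Lemma \ref{lem:common-domain}), promotion of a homotopy of classifying maps to a path of germs via the parametric existence theorem on $L\times[0,1]$ (Lemma \ref{lem:germ-htpy}), and the concluding Moser/isotopy-extension step producing $f_t=f\circ H_t$. You also correctly isolate the genuinely delicate point (the h-principle lifting in the injectivity half), which is exactly the part the paper defers entirely to Latour, so the approaches are essentially the same.
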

\begin{remark}
    Latour \cite{LatourGF}*{Theorem IV.1.10} assumed that $L$ is connected and proved that germs of generating families are stably classified by the stable homotopy classes $[L, BO]$. However, when $L$ is disconnected, since stabilizations by quadratic forms change the indices of critical points on all connected components simultaneously, the classification is given by homotopy classes of pointed spaces $[L, \bZ \times BO]$ where the component containing the base point $x_0 \in L$ is sent to $0 \times BO$. Alternatively, we could only allow stabilizations by quadratic forms of signature $0$ to fix the component of the image in $\bZ$.
\end{remark}

    In order to compare arbitrary germs of generating families, we need the following lemma.

\begin{lemma}[Latour \cite{LatourGF}*{Lemma IV.4.1}]\label{lem:common-domain}
    Let $L \rightarrow J^1M$ be a Legendrian immersion, and $(U, f)$, $(U', f')$ be germs of generating families for $L$. Then there exist stabilizations $(\overline{U}, \overline{f})$, $(\overline{U}{}', \overline{f}{}')$ and a diffeomorphism $H: \overline{U} \xrightarrow{\sim} \overline{U}{}'$ such that $(\overline{U}, \overline{f})$, $(\overline{U}, \overline{f}{}' \circ H)$ have common critical locus $$\Sigma_{\overline{f}} = \Sigma_{\overline{f}{}' \circ H}.$$
\end{lemma}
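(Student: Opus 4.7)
The plan is to stabilize both germs so that their critical loci sit as two embeddings of $L$ into a common ambient bundle over $M$, construct a fiber-preserving isotopy between the two embeddings, and promote it to an ambient diffeomorphism by parametric isotopy extension. First I would stabilize symmetrically, setting
$$\overline{f}(x, u, v) = f(x, u) + Q(v), \qquad \overline{f}{}'(x, u, v) = f'(x, v) + Q'(u),$$
with $Q$ a nondegenerate quadratic form on $\bR^{n'}$ and $Q'$ a nondegenerate quadratic form on $\bR^n$, giving germs on open subsets of $M \times \bR^n \times \bR^{n'}$. Their critical loci are $\Sigma_{\overline{f}} = \Sigma_f \times \{0\}$ and $\Sigma_{\overline{f}{}'} = \{0\} \times \Sigma_{f'}$, both embeddings of $L$ into $M \times \bR^n \times \bR^{n'}$ covering the common base map $\pi_M \circ \iota : L \to M$.

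Next I would build a smooth fiber-preserving isotopy between these two embeddings by a two-step slide in the fiber. Let $u : L \to \bR^n$ and $u' : L \to \bR^{n'}$ be the fiber coordinates provided by the two critical loci. First interpolate from $(u(p), 0)$ to $(u(p), u'(p))$ via $(u(p), t u'(p))$, then from $(u(p), u'(p))$ to $(0, u'(p))$ via $((1-t)u(p), u'(p))$. At every time $t$ either the first or the second factor restricts to an embedding on each fiber of $\pi_M$, so the concatenation yields a smooth family $e_t : L \to M \times \bR^n \times \bR^{n'}$ of fiber-preserving embeddings over $M$ interpolating between $\Sigma_{\overline{f}}$ and $\Sigma_{\overline{f}{}'}$.

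Then I would apply parametric fiber-preserving isotopy extension in the trivial bundle $M \times \bR^n \times \bR^{n'} \to M$ to produce an ambient isotopy $H_t$ on an open neighborhood of $\Sigma_{\overline{f}}$ with $H_0 = \mathrm{id}$ and $H_t \circ e_0 = e_t$. Shrinking $\overline{U}$ and $\overline{U}{}'$ as the germ formulation allows, I set $H := H_1 : \overline{U} \xrightarrow{\sim} \overline{U}{}'$. Because $H$ preserves the $M$-fibers, the composition $\overline{f}{}' \circ H$ is still a generating family for $L$ in the sense of Definition \ref{def:linear}, and the chain rule gives $\Sigma_{\overline{f}{}' \circ H} = H^{-1}(\Sigma_{\overline{f}{}'}) = \Sigma_{\overline{f}}$, as required.

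The step I expect to require the most care is the isotopy extension: since the base map $\pi_M \circ \iota$ need not be an immersion (the front projection of $L$ can have caustics), the extension cannot be produced by a naive tubular-neighborhood argument on $L \subset M \times \bR^{n+n'}$. Instead I would run Thom's isotopy extension theorem fiberwise over $M$, patch the fiberwise extensions via a partition of unity on $M$, and check smooth dependence on the base parameter, shrinking the neighborhood of $\Sigma_{\overline{f}}$ if necessary so that $H_t$ is well defined and a diffeomorphism throughout. The stabilization, the slide, and the chain-rule identification of critical loci are then routine.
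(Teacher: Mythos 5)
The paper does not prove this lemma at all --- it is quoted directly from Latour --- so there is no in-paper argument to compare against; your proposal has to stand on its own. Its overall strategy is the standard one and, I believe, essentially Latour's: stabilize so that the two fiber-critical loci become embeddings of $L$ into a common trivial bundle $M \times \bR^{n}\times\bR^{n'}$, one through the first fiber factor and one through the second; slide one onto the other through fiber-preserving embeddings; and promote the slide to an ambient fiber-preserving diffeomorphism. The slide itself is fine: $(x,u,tu')$ and $(x,(1-t)u,u')$ each contain an embedding of $L$ as a sub-tuple, hence are embeddings covering the fixed map $\pi_M\circ\iota$ (after smoothing the concatenation at the junction time). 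The identification $\Sigma_{\overline{f}{}'\circ H}=H^{-1}(\Sigma_{\overline{f}{}'})$ and the fact that $\overline{f}{}'\circ H$ is again a generating family for the same Legendrian both hinge on $H$ being fiber-preserving over $M$, and you correctly insist on this throughout.

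The one step I would not accept as written is the isotopy extension. Running Thom's isotopy extension theorem \emph{fiberwise over $M$} and patching with a partition of unity on $M$ does not literally make sense: over a caustic value $x\in M$ the slice $e_t(L)\cap(\{x\}\times\bR^{n+n'})$ is the image of $(\pi_M\circ\iota)^{-1}(x)$, which need not be a submanifold of the fiber, and these slices do not vary locally trivially in $x$, so there is no family of fiberwise isotopies to extend and patch. The correct (and simpler) mechanism works on the total space: the velocity $\tfrac{d}{dt}e_t$ is a section of the \emph{vertical} tangent bundle along the embedded submanifold $e_t(L)\subset M\times\bR^{n+n'}$, i.e.\ just a smooth $\bR^{n+n'}$-valued function on that submanifold; extend it (tubular neighbourhood plus cutoff) to a time-dependent vertical vector field on the ambient space and integrate. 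The flow is then automatically of the form $(x,w)\mapsto(x,h_{x,t}(w))$, giving the required fiber-preserving $H_t$ with $H_t\circ e_0=e_t$. For noncompact $L$ one must arrange the cutoff so the flow exists up to time $1$ on a neighbourhood of $\Sigma_{\overline{f}}$, which is harmless since only a germ is needed. With that substitution your argument is complete.
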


    Given any two germs of generating families $f$ and $f'$, by the above lemma, one can assume that after stabilizations they are defined on the same domain. When $\Delta(\overline{f}) - \Delta(\overline{f}{}') = 0$, there is a homotopy between the classifying maps $L \rightarrow \bZ \times BO$ of $\overline{f}$ and $\overline{f}{}'$. Therefore, by Bott periodicity we can show that the null homotopies of the Lagrangian Gauss map $L \times [0, 1] \rightarrow U/O$ determined by $\overline{f}$ and $\overline{f}{}'$ are homotopic. Using the existence theorem of generating families for $L \times [0, 1]$, we get the following lemma.

\begin{lemma}[Latour \cite{LatourGF}*{Proof IV.4.5}]\label{lem:germ-htpy}
    Let $L \rightarrow J^1M$ be a Legendrian immersion, and $(U, f)$, $(U, f')$ be germs of generating families for $L$ with common critical locus $\Sigma_f = \Sigma_{f'}$. Suppose $\Delta(f) - \Delta(f') = 0$. Then after stabilization there is a family of germs of generating functions $(\overline{U}, \overline{f}_t)$ for $L$ such that $\overline{f}_0 = \overline{f}$ and $\overline{f}_1 = \overline{f}{}'$.
\end{lemma}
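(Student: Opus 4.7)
The plan is to reduce to Latour's classification (Theorem \ref{thm:Latour}) applied to the cylinder $L \times [0,1]$ rather than $L$ itself, interpreting a $1$-parameter family of germs of generating families on $L$ as a single germ on $L \times [0,1]$. To that end, first equip $L \times [0,1]$ with the obvious Legendrian immersion into $J^1(M \times [0,1])$ where the $[0,1]$-cotangent coordinate and the $t$-derivative of $z$ both vanish. Since $L \times [0,1]$ deformation retracts to $L$ and $L$ admits a germ of generating family by hypothesis, the stable Lagrangian Gauss map of $L \times [0,1]$ is null homotopic, so Theorems \ref{thm:Giroux} and \ref{thm:Latour} apply: stable equivalence classes of germs on $L \times [0,1]$ are in bijection with $[L \times [0,1], \bZ \times BO]$.

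Next, I would use the hypothesis $\Delta(f) - \Delta(f') = 0 \in [L, \bZ \times BO]$ to choose a pointed homotopy $\widetilde\Delta : L \times [0,1] \to \bZ \times BO$ from $\Delta(f)$ to $\Delta(f')$. Applying Theorem \ref{thm:Latour} to $L \times [0,1]$ produces a germ of generating family $F(x,t,u)$ realizing $\widetilde\Delta$, whose restrictions $F|_{t=0}$ and $F|_{t=1}$ are germs for $L$ with classifying maps $\Delta(f)$ and $\Delta(f')$ respectively. Applying Theorem \ref{thm:Latour} a second time on $L$ alone, after further stabilizations I obtain strict equivalences $H_0$ identifying $F|_{t=0}$ with a stabilization $\overline f$ of $f$, and $H_1$ identifying $F|_{t=1}$ with a stabilization $\overline f{}'$ of $f'$. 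By adding cancelling quadratic factors to equalize the ranks and signatures of the stabilizing forms at the two ends, I may assume $\overline f$ and $\overline f{}'$ are defined on a common domain $\overline U$, which is consistent with the conclusion of Lemma \ref{lem:common-domain}.

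To produce the required family $\overline f_t$, I would modify $F$ on collar neighborhoods $[0, \epsilon) \cup (1-\epsilon, 1]$ of the boundary by a $t$-dependent fiber-preserving (over $M$) diffeomorphism interpolating between the identity and $H_0$ near $t=0$, and likewise between the identity and $H_1$ near $t=1$, so that after this modification the family is literally $\overline f$ at $t=0$ and $\overline f{}'$ at $t=1$. Setting $\overline f_t := F(\cdot, t, \cdot)$ after this modification gives the desired 1-parameter family.

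The main obstacle I expect is this last interpolation step: producing the required collar diffeomorphisms while keeping the fiberwise critical locus and its identification with $L$ intact, so that the deformed $F$ remains a genuine germ of generating family for $L$ at every $t$. This amounts to showing that the space of strict equivalences of a fixed germ is connected in the relevant sense, which in Latour's framework is handled by yet another application of the parametric $h$-principle underlying Theorem \ref{thm:Latour}, applied to the smaller problem of deforming a strict equivalence through strict equivalences to the identity, possibly after one further round of stabilization.
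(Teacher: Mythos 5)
Your overall strategy --- passing to the cylinder $L \times [0,1]$ and invoking the existence/classification machinery there --- is the same as the paper's. But the implementation diverges at the decisive step, and that is where the gap sits. You apply the \emph{absolute} classification (Theorem \ref{thm:Latour}) on $L \times [0,1]$ to produce some germ $F$ in the right homotopy class, and only afterwards try to match $F|_{t=0}$ and $F|_{t=1}$ to $\overline{f}$ and $\overline{f}{}'$. Latour's theorem only tells you these restrictions are \emph{stably} equivalent to $\overline{f}$ and $\overline{f}{}'$, via strict equivalences $H_0$, $H_1$ after further stabilization. To absorb $H_0$ into the family you must connect $H_0$ to the identity through fiber-preserving diffeomorphisms (so that $F_t \circ H_t$ remains a germ of generating family for $L$ at each $t$), i.e., you need the relevant space of strict equivalences to be connected. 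You flag this yourself as the main obstacle, but the proposed fix --- ``yet another application of the parametric $h$-principle underlying Theorem \ref{thm:Latour}'' --- is not something that theorem provides: it is a bijection on $\pi_0$ of stable equivalence classes and says nothing about the connectivity of the space of equivalences between two fixed germs. That is a genuinely higher-homotopy refinement and is not available for free (for instance, a fiberwise linear equivalence of negative determinant is not connected to the identity before stabilizing, and even after stabilizing one must check the path can be chosen compatibly with the critical locus).

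The paper (following Latour's Proof IV.4.5) sidesteps this entirely by using the existence theorem in its \emph{relative} form. After Lemma \ref{lem:common-domain} puts the two germs on a common domain with common critical locus, the hypothesis $\Delta(f)-\Delta(f')=0$ gives a homotopy of classifying maps $L \to \bZ \times BO$, which by Bott periodicity is the same as a homotopy rel endpoints between the two null homotopies $L \times [0,1] \to U/O$ of the stable Lagrangian Gauss map determined by $\overline{f}$ and $\overline{f}{}'$. This is exactly the formal data needed to run the existence $h$-principle on $L\times[0,1]$ \emph{with prescribed boundary values}: the output is a family $\overline{f}_t$ that literally equals $\overline{f}$ at $t=0$ and $\overline{f}{}'$ at $t=1$, with no a posteriori boundary correction. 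To repair your argument, replace the second application of Theorem \ref{thm:Latour} plus the collar interpolation by this relative extension statement; as written, the boundary-matching step is unproven.
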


    Then using Moser argument and isotopy extension theorem \cite{LatourGF}*{Lemma IV.4.4}, from the homotopy of generating families $(\overline{U}, \overline{f}_t)$, one can easily define an isotopy of manifolds $H_t$ such that $\overline{f}_t = \overline{f} \circ H_t$. This is the key idea in the proof of Latour's theorem.

    Using Latour's theorem, we can prove the following extension theorem for germs of generating families.

    Suppose $L$ is a Legendrian and the stable Lagrangian Gauss map $L \rightarrow U/O$ is trivial. We fix a reference class of germ of generating family $g$ for $L$, and then for any generating family $f$ for $L$, define the classifying map
    $$\Delta(f) = [V_{<0}f \oplus V_{\geq 0}g] - [V_{<0}f(x_0, u_0) \oplus V_{\geq 0}g(x_0, u_0)] \in [L, \bZ \times BO].$$
    For an open submanifold $L_0 \subseteq L$, we restrict the reference class $g$ to $L_0$, and given a generating family $f_0$ for $L_0$, define the classifying map
    $$\Delta(f_0) = [V_{<0}f_0 \oplus V_{\geq 0}g|_{L_0}] - [V_{<0}f_0(x_0, u_0) \oplus V_{\geq 0}g(x_0, u_0)|_{L_0}] \in [L_0, \bZ \times BO].$$

\begin{proposition}\label{prop:germ-extend}
    Let $L \rightarrow J^1(M)$ be a compact Legendrian immersion. Then a generating family $f_0$ for an compact submanifold $L_0 \subseteq L$ with smooth boundary extends to $L$ after stabilization if and only if the stable Lagrangian Gauss map $L \rightarrow U/O$ is homotopically trivial and the homotopy class associated with the generating family $\Delta({f_0}): L_0 \rightarrow \bZ \times BO$ extends to $L$.
\end{proposition}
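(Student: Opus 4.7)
The necessity direction is immediate: any extension of $f_0$ to a germ on $L$ gives an extension of $\Delta(f_0)$ via its own classifying map, and Giroux's Theorem \ref{thm:Giroux} forces the stable Lagrangian Gauss map of $L$ to be null homotopic.

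For the sufficiency direction, the plan is to first produce some global germ on $L$ whose classifying map agrees with $\Delta(f_0)$ on $L_0$, and then modify it by a fibered diffeomorphism so that it actually restricts to $f_0$ on $L_0$. Let $\Delta \colon L \to \bZ \times BO$ be the prescribed extension of $\Delta(f_0)$. Since the stable Lagrangian Gauss map of $L$ is trivial, Theorem \ref{thm:Giroux} yields some germ of generating family $(U_1, f_1)$ on all of $L$. By Latour's classification (Theorem \ref{thm:Latour}), after enough stabilizations I can replace $f_1$ by a stably equivalent germ $f_2$ whose classifying map equals $\Delta$; in particular $\Delta(f_2)|_{L_0} = \Delta(f_0)$.

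Restricting attention to $L_0$, apply Lemma \ref{lem:common-domain} to the pair $f_0$ and $f_2|_{L_0}$ to arrange, after a further stabilization and a fibered diffeomorphism, that both germs are defined on a common open neighborhood with a common fiberwise critical locus. Their classifying maps still coincide on $L_0$, so Lemma \ref{lem:germ-htpy} produces a smooth one-parameter family of germs $\overline{f}_t$ on $L_0$ with $\overline{f}_0 = \overline{f}_2|_{L_0}$ and $\overline{f}_1 = \overline{f}_0$. The Moser-type argument of \cite{LatourGF}*{Lemma IV.4.4} converts this homotopy into a fibered isotopy $H_t$ of a neighborhood of $\Sigma_{\overline{f}_2}|_{L_0}$ in $M \times \bR^N$ projecting to the identity on $M$, with $H_0 = \mathrm{id}$ and $\overline{f}_2 \circ H_t = \overline{f}_t$.

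The main obstacle is to realize this deformation globally on $L$. Since $L_0 \subseteq L$ is compact with smooth boundary, I fix a collar of $\partial L_0$ in $L$ and invoke the isotopy extension theorem in the ambient $M \times \bR^N$ to extend $H_t$ to a compactly supported fibered isotopy $\widetilde H_t$ equal to the identity outside a slightly enlarged neighborhood of $L_0$. Since $\widetilde H_1$ is a fibered diffeomorphism, $\overline{f}_2 \circ \widetilde H_1$ is automatically a germ of generating family for the same Legendrian $L$: its fiberwise critical locus is $\widetilde H_1^{-1}(\Sigma_{\overline{f}_2})$, still in bijection with $L$ via the Legendrian immersion. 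By construction this global germ agrees with $\overline{f}_0$ on $L_0$ and with $\overline{f}_2$ outside a neighborhood of $L_0$, furnishing the desired extension. The delicate step is the interpolation region over the collar of $\partial L_0$, where one must verify that the damping of $H_t$ to the identity does not produce new fiberwise critical points — this is arranged by taking the damping to be supported in a small tubular neighborhood of the critical locus inside $M \times \bR^N$, on which the fibered diffeomorphism structure preserves the critical locus setwise.
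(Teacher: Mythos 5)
Your proposal is correct and follows essentially the same route as the paper — Giroux's Theorem~\ref{thm:Giroux} plus Latour's classification (Theorem~\ref{thm:Latour}) to produce a global germ realizing the prescribed extension of $\Delta(f_0)$, then Lemmas~\ref{lem:common-domain} and~\ref{lem:germ-htpy} over $L_0$, then gluing over a collar of $\partial L_0$ — the only difference being that you glue by extending the fibered Moser isotopy $H_t$ to a compactly supported ambient fibered diffeomorphism (which, as you note, automatically preserves the generating-family property, making your final ``delicate step'' worry superfluous), whereas the paper interpolates the homotopy of germs directly via a cutoff $\overline{f}_{\rho(x,u)}$. One minor wording slip: the germ $f_2$ realizing the extended class $\Delta$ is not ``stably equivalent'' to $f_1$ (stably equivalent germs have the same classifying map); what you are using is the surjectivity of Latour's bijection onto $[L, \bZ \times BO]$.
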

\begin{proof}
    When the Lagrangian Gauss map $L \rightarrow U/O$ is stably trivial, by Giroux's Theorem \ref{thm:Giroux} we know that $L$ admits a generating family, and by Latour's Theorem \ref{thm:Latour} we know that (for a given choice of reference class of generating families) the stable equivalence classes of generating families are in bijection with $[L, \bZ \times BO]$.

    Consider the generating family $f_0$ on a compact submanifold $L_0 \subseteq L$, and $U_0$ an open neighbourhood of $L_0$ such that $L_0 = U_0 \cap L$. Let $L_0'$ be an open tubular neighbourhood of $L_0 \subseteq L$ which deformation retracts onto $L_0$, and $U_{0}'$ be an open neighbourhood $U_{0} \subseteq U$ which deformation retracts to $U_0$ such that $L_0' = U_0' \cap L$. Then $f_0$ extends to a germ of generating family $f_0'$ on $L_0'$. Under the deformation retraction $L_0' \xrightarrow{\sim} L_0$, the associated homotopy class $\Delta({f_0}): L_0 \rightarrow \bZ \times BO$ is identical as $\Delta({f_0}'): L_0' \rightarrow \bZ \times BO$. Assume that $\Delta({f_0}')$ extends to $\Delta(f_1): L \rightarrow \bZ \times BO$ which by Latour's Theorem \ref{thm:Latour} corresponds to the generating family $f_1$ defined on an open neighbourhood $U$ of $L$. Restricting to $U_0'$, we can apply Latour's Theorem \ref{thm:Latour}, Lemma \ref{lem:common-domain} and Lemma \ref{lem:germ-htpy} so that after stabilization, $\overline{f}_1|_{\overline{U}{}'_0}$ is homotopic to $\overline{f}{}'_0$ through a family $\overline{f}_t$, $0 \leq t \leq 1$. Let $\rho: \overline{U} \rightarrow \bR$ be a cut-off function such that $\rho|_{\overline{U}_0} = 0$ and $\rho|_{\overline{U} \backslash \overline{U}_0'} = 1$. Then
    $$\overline{f}(x, u) = \overline{f}_{\rho(x, u)}'(x, u) : \overline{U} \rightarrow \bR$$
    is a generating family on $L$ stably equivalent to $f_0$ on the open neighbourhoods $U_0$ of $L_0$.
\end{proof}

\subsection{Generating families linear at infinity}
    Any Morse-theoretic argument of generating families will not work without control on the behaviour of the generating families away from the critical locus. This is why we need the linear at infinity condition. Following \cite{TwistGF}, we introduce generating families that are weakly linear at infinity. Unlike Definition \ref{def:linear}, for generating families that are weakly linear at infinity, we only require linearity outside compact subsets on some $\bR^n$-factor of $\bR^{m+n}$. Alternatively, one can also simply deal with generating functions linear and quadratic at infinity following Jordan-Traynor \cite{JordanTraynor}.

\begin{definition}[\cite{TwistGF}*{Definition 3.1}]
    Let $f: M \times \bR^{m+n} \rightarrow \bR$ be a generating function. Then $f$ is called a function weakly linear at infinity if outside a compact subset in $\bR^{m+n}$,
    $$f(x, u_1, u_2) = g(x, u_1) + L(x, u_2)$$
    where $L(x, -): \bR^n \rightarrow \bR$ is a linear function. Equivalently, $f$ is called weakly linear at infinity if there is a function $\epsilon: M \times \bR^{m+n} \rightarrow \bR$ where $\pi_M: \mathrm{supp}(\epsilon) \rightarrow M$ is proper such that
    $$f(x, u_1, u_2) = \epsilon(x, u_1, u_2) + g(x, u_1) + L(x, u_2).$$
\end{definition}

\begin{lemma}
    Let $f: M \times \bR^{m+n} \rightarrow \bR$ be a generating function weakly linear at infinity. Then there is a homotopy of functions $f_t$ such that $f_0 = f$ and $f_1$ is a function strongly linear at infinity such that the fiberwise critical locus $\Sigma_{f_t} = \Sigma_f$ and $f_t = f$ on a neighbourhood of the critical locus.
\end{lemma}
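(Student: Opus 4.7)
\textit{Proof plan.} By hypothesis we have a decomposition $f(x,u_1,u_2) = \epsilon(x,u_1,u_2) + g(x,u_1) + L(x,u_2)$ with $\pi_M\colon \mathrm{supp}(\epsilon) \to M$ proper and $L(x,\cdot)$ linear. The strategy is to truncate the term $g(x,u_1)$ outside a suitably chosen subset of $M \times \bR^m$ that is proper over $M$, while keeping $f$ unchanged in a neighbourhood of the fiberwise critical locus $\Sigma_f$. Because $f$ is a generating family, the critical locus $\Sigma_f$ is diffeomorphic to $\Lambda$ and so is itself proper over $M$; combining this with the properness of $\mathrm{supp}(\epsilon)$, I will pick an open set $W \subset M \times \bR^m$ projecting properly to $M$, containing the image of $\mathrm{supp}(\epsilon) \cup \Sigma_f$ under $\pi_{M \times \bR^m}$, and a cutoff $\chi\colon M\times\bR^m \to [0,1]$ with $\chi \equiv 1$ on $W$ and $\chi \equiv 0$ outside a slightly larger set $W'$ which still projects properly to $M$.

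Then I will take the straight-line homotopy
\begin{equation*}
f_t(x,u_1,u_2) = \epsilon(x,u_1,u_2) + \bigl[(1-t) + t\chi(x,u_1)\bigr]\, g(x,u_1) + L(x,u_2),
\end{equation*}
so that $f_0 = f$ and $f_1 = \epsilon + \chi g + L$. Outside $\mathrm{supp}(\epsilon) \cup \mathrm{supp}(\chi)$, which is proper over $M$, one has $f_1(x,u_1,u_2) = L(x,u_2)$, a linear function of $(u_1,u_2)$, so $f_1$ meets Definition \ref{def:linear}. On the set $\{\chi \equiv 1\}$, which contains a neighbourhood of $\Sigma_f$, the homotopy is stationary, so $f_t = f$ there; in particular, the critical points in $\mathrm{supp}(\epsilon) \cup \Sigma_f$ are preserved verbatim. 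Outside $\mathrm{supp}(\epsilon) \cup \mathrm{supp}(\chi)$, a fiberwise critical point of $f_t = (1-t)g + L$ requires $\partial_{u_2} L = 0$; under the standard assumption (built into the generating family condition together with weak linearity at infinity) that the slope of $L$ vanishes only on a proper subset of $M$, this region contains no new critical points.

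The main obstacle is the intermediate transition region $\{0 < \chi < 1\}$, where
\begin{equation*}
\partial_{u_1} f_t = t\,(\partial_{u_1}\chi)\, g + \bigl[(1-t) + t\chi\bigr]\partial_{u_1}g,
\end{equation*}
and one must rule out that this cancels with the existing terms so as to produce spurious fiberwise critical points. I expect to handle this by exploiting that the transition region can be chosen disjoint from $\Sigma_f$—hence $\partial_{u_1} g$ is nonvanishing there—and then choosing $\chi$ with gradient small enough relative to $|\partial_{u_1} g|/|g|$ so that the correction term $t(\partial_{u_1}\chi)g$ never swamps the leading $\partial_{u_1}g$ contribution. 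Equivalently, one can first precompose with a fiberwise diffeomorphism (in the spirit of Lemma \ref{lem:common-domain}) to control the growth of $g$ on $W' \setminus W$, after which a standard cutoff suffices. This is the only delicate step; the rest of the argument is a direct verification on a neighbourhood-by-neighbourhood basis, and the conclusion $f_t \equiv f$ on a neighbourhood of $\Sigma_f$ follows immediately from $\chi \equiv 1$ on $W$.
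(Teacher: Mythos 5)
Your proposal is essentially the paper's own proof: the paper takes a cut-off $\rho$ on $M \times \bR^{m+n}$ with $\rho \equiv 1$ on $\mathrm{supp}(\epsilon)$ and proper support over $M$, and uses the identical straight-line homotopy $f_t = \epsilon + (1-t+t\rho)g + L$. Your discussion of the transition region $\{0 < \chi < 1\}$ (where $\partial_{u_1}\chi \cdot g$ could in principle create spurious fiberwise critical points) is actually more careful than the paper, which asserts without verification that this is "the required homotopy."
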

\begin{proof}
    Let $\rho: M \times \bR^{m+n} \rightarrow \bR$ be a smooth cut-off function such that $\rho|_{\mathrm{supp}(\epsilon)} \equiv 1$ and $\pi_M: \mathrm{supp}(\rho) \rightarrow M$ is proper. Define $f_t(x, u_1, u_2) = \epsilon(x, u_1, u_2) + (1 - t + t\rho(x, u_1, u_2))g(x, u_1) + L(x, u_2)$. This is the required homotopy.
\end{proof}

    Comparing Definition \ref{def:linear} of functions strictly linear at infinity, the advantage of functions weakly linear at infinity is that they are invariant under stabilizations.

\begin{definition}[\cite{TwistGF}*{Definition 3.3}]
    Let $f: M \times \bR^{m+n} \rightarrow \bR$ be a generating function weakly linear at infinity of the form
    $$f(x, u_1, u_2) = \epsilon(x, u_1, u_2) + g(x, u_1) + L(x, u_2)$$
    where $\pi_M: \mathrm{supp}(\epsilon) \rightarrow M$ is proper and $\sup|\epsilon| \leq b$.

    Let $\mathcal{Q}_k$ be the space of quadratic forms on $\bR^k$, $Q: M \rightarrow \mathcal{Q}_k$ be a family of quadratic forms, and $\chi: \mathcal{Q}_k \times \bR^k \rightarrow \bR$ be a cut-off function such that $\chi_q(v) = \chi(q, v)$ is compactly supported, $\chi_q \equiv 1$ near $0 \in \bR^m$ and $|D_v \chi_q| < |D_v q|$. Then the stabilization of $f$ by $Q$ with compact cut-off is defined by
    \begin{align*}
    &\overline{f}_\textit{cpt}:  M \times \bR^{m+n} \times \bR^k \rightarrow \bR, \\
    \overline{f}_\textit{cpt}(x, u_1, u_2, v) = & \, \chi_{Q(x)}(b^{-1}v)\epsilon(x, u_1, u_2) + g(x, u_1) + Q(x, v) + L(x, u_2).
    \end{align*}
\end{definition}
\begin{remark}
    We refer to \cite{TwistGF}*{Lemma 3.2} for the details on the existence of the choice of cut-off function $\chi: \mathcal{Q}_k \times \bR^k \rightarrow \bR$ that satisfy compatibility conditions.
\end{remark}

\begin{lemma}[Sabloff-Traynor \cite{Genfamily}*{Lemma 3.8}, Abouzaid-Courte-Guillermou-Kragh \cite{TwistGF}*{Lemma 3.4}]
    Let $f: M \times \bR^{m+n} \rightarrow \bR$ be a generating function weakly linear at infinity and $Q: M \rightarrow \mathcal{Q}_k$ be a family of quadratic forms. Let $\overline{f}_\textit{cpt}$ be the stabilization by $Q$ with compact cut-off and $\overline{f}$ be the stabilization by $Q$. Then there is a homotopy $\overline{f}_t$ with $\overline{f}_0 = \overline{f}_\textit{cpt}$ and $\overline{f}_1 = \overline{f}$ such that $\overline{f}_t$ have the same critical locus and agree on a neighbourhood of the critical locus.
\end{lemma}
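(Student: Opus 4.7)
The plan is to interpolate $\overline{f}_\textit{cpt}$ and $\overline{f}$ by linearly modulating only the coefficient of the bounded piece $\epsilon$, leaving every other summand unchanged, and then to verify by a direct calculation that the fiberwise critical locus is $t$-independent and that every function in the family agrees with $\overline{f}$ on an open neighbourhood of this locus.

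Concretely I would set
\[
\overline{f}_t(x, u_1, u_2, v) = \bigl((1-t)\chi_{Q(x)}(b^{-1}v) + t\bigr)\,\epsilon(x, u_1, u_2) + g(x, u_1) + Q(x, v) + L(x, u_2),
\]
so that $\overline{f}_0 = \overline{f}_\textit{cpt}$ and $\overline{f}_1 = \overline{f}$, while the quadratic piece $Q(x, v)$ together with $g$ and $L$ are unchanged along the homotopy. The fiberwise derivative in $v$ then reads
\[
\partial_v \overline{f}_t = (1-t)\, b^{-1}(D_v\chi_{Q(x)})(b^{-1}v)\,\epsilon + (D_v Q)(x, v),
\]
and the derivatives in $u_1, u_2$ each pick up a multiplicative prefactor that is strictly positive on the support of $\chi$ and equals $1$ wherever $\chi_{Q(x)}(b^{-1}v) = 1$.

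To identify the critical locus, I would combine the bound $\sup|\epsilon| \le b$, the hypothesis $|D_v \chi_q| < |D_v q|$, and the degree-one homogeneity $D_v Q(x, v) = b\,D_v Q(x, b^{-1}v)$ coming from $Q$ being fiberwise quadratic, to deduce that for $v \ne 0$ the two summands in $\partial_v \overline{f}_t$ have strictly different magnitudes; hence $\partial_v \overline{f}_t = 0$ forces $v = 0$. At $v = 0$ one has $\chi_{Q(x)}(0) = 1$, so the prefactors on $\partial_{u_i}\epsilon$ equal $1$ and the remaining critical equations collapse to those of the original $f$. Every $\overline{f}_t$ therefore has the common critical locus $\Sigma_f \times \{0\}$.

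For the agreement statement, I would use that $\chi_q \equiv 1$ on an open neighbourhood of $0 \in \bR^k$: this yields an open neighbourhood of $\{v = 0\}$ on which the coefficient $(1-t)\chi_{Q(x)}(b^{-1}v) + t \equiv 1$ for every $t \in [0,1]$, so $\overline{f}_t \equiv \overline{f}$ there. Since the common critical locus lies inside $\{v = 0\}$, this neighbourhood gives the desired open set. The main technical obstacle is the scaling issue in the norm comparison: one must carefully track the mismatch between the $b^{-1}$-rescaling inside $\chi_{Q(x)}(b^{-1}v)$ and the linear scaling of $D_v Q$, so that the pointwise hypothesis $|D_v \chi_q| < |D_v q|$ translates into the strict inequality needed to pin the critical locus to $\{v=0\}$; once this estimate is in hand, the remainder reduces to bookkeeping of where $\chi \equiv 1$.
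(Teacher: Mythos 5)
The paper does not actually prove this lemma --- it is quoted from Sabloff--Traynor and Abouzaid--Courte--Guillermou--Kragh --- so there is no in-paper argument to compare against; your proof is the standard one from those references and it is correct. The linear interpolation of the coefficient of $\epsilon$ is exactly the right homotopy, and your identification of the critical locus via the norm comparison $|D_v\chi_q|<|D_v q|$ together with the homogeneity $D_vQ(x,bw)=b\,D_vQ(x,w)$ goes through, with two small points worth making explicit: you need $Q(x)$ nondegenerate so that $D_vQ(x,v)=0$ forces $v=0$, and the scaling bookkeeping closes only after normalizing $b\geq 1$ (harmless, since $b$ is merely an upper bound for $|\epsilon|$ and may be enlarged).
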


    Given the above lemma, we will no longer distinguish generating families that are weakly linear and strongly linear at infinity, nor will be distinguish stabilization of $f$ by $Q$ with compact cut-off and standard stabilizations of $f$ by $Q$.

\subsection{Generating families on the doubling}
    Let $L$ be the Legendrian lift of an exact Lagrangian cobordism from $\Lambda_-$ to $\Lambda_+$ \cite{Chantraine,EHK}. Given a generating family linear at infinity on the negative end $\Lambda_-$ that extends to a germ of generating family on $L$, we use the doubling construction to get a generating family linear at infinity for the double copies $T_{-\epsilon}(L) \cup T_\epsilon(L)$ given by the Reeb pushoff after modifying the construction of Abouzaid-Courte-Guillermou-Kragh \cite{TwistGF}. This is the key construction in our paper.

    First, recall that there is a strict contactomorphism between the contactization of the symplectization of $J^M$ and $J^1(M \times \bR_{>0})$ by \cite{ChekanovGF,PanRuther}
    \[\begin{array}{ccc}
    ((J^1M \times \bR_{>0}) \times \bR, dw + s\alpha_{J^1M}) & \xrightarrow{\sim} & (J^1(M \times \bR_{>0}), \alpha_{J^1(M \times \bR_{>0})}),\\
    (x, \xi, t; s; w) & \mapsto & (x, s; s\xi, t; st + w),
    \end{array}\]
    where we use the standard contact form $\alpha_{J^1M} = dt - \xi dx$. The Legendrian lift of an exact Lagrangian cobordism from $\Lambda_-$ to $\Lambda_+$ gives rise to a conical Legendrian cobordism \cite{PanRuther}.

\begin{definition}
    Let $\Lambda_\pm \hookrightarrow J^1M$ be Legendrian embeddings. Then a Legendrian embedding $L \hookrightarrow J^1(M \times \bR_{>0})$ is called a conical Legendrian cobordism from $\Lambda_-$ to $\Lambda_+$ if for some $0 < s_- < s_+ < +\infty$, there exist $w_{0,-}, w_{0,+} \in \bR$ such that
    \begin{align*}
    {L} \cap J^1(M \times (0, s_-)) &= \{(x, s, s\xi, t, st + w_{0,-}) | (x, \xi, t) \in \Lambda_-, s \in (0, s_-)\}, \\
    {L} \cap J^1(M \times (s, +\infty)) &= \{(x, s, s\xi, t, st + w_{0,+}) | (x, \xi, t) \in \Lambda_+, s \in (s_+, +\infty)\}.
    \end{align*}
\end{definition}

    Given a generating family $f_-: M \times \bR^k \rightarrow \bR$ linear at infinity of $\Lambda_-$, the classical trick of Chekanov \cite{ChekanovGF}*{Proposition 5.3} implies that there is a generating family linear at infinity
    $$f_-: M \times \bR_{>0} \times \bR^k \rightarrow \bR, \;\, f_-(x, s, u) = sf_-(x, u)$$
    that generates the conical Legendrian $\Lambda_- \times \bR_{>0} = \{(x, s, s\xi, t, st) | (x, \xi, t) \in \Lambda_-\}$. This will be the starting point of our construction.

    In order to construct a generating family on the double copy $T_{-\epsilon}(L) \cup T_\epsilon(L)$, we need to consider the following function linear at infinity with two critical points.

\begin{definition}
    For $h > 0$, define $D_h: \bR \rightarrow \bR$ as a smooth function such that $D_h(w) = h(w^3 - 3w)/2$ when $|w| \leq 1$, $D'_h(w) \geq 0$ when $1 \leq |w| \leq 2$, and $D_h(w) = w$ when $|w| \geq 2$.
\end{definition}

\begin{lemma}
    Let $f: M \times \bR^k \rightarrow \bR$ be a generating family linear at infinity and $f_{dbl}(x, u, w) = f(x, u) + D_h(w)$. Then there is a homotopy of functions $f_{t}: M \times \bR^{k+1} \rightarrow \bR$, $0 \leq t \leq 1$, such that $\Sigma_{f_{t}} = \Sigma_{f_{dbl}}$, $f_t = f_{dbl}$ on a neighbourhood of the critical locus, and $f_1$ is linear at infinity.
\end{lemma}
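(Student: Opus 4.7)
The plan is to cut off the non-linear part of $f_{dbl}$ by a bump function supported on a set proper over $M$, with the transition region chosen wide enough to preserve the fiberwise critical locus.

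First I would decompose the given generating family as $f = \epsilon + \ell$ with $\ell(x,u) = \langle a, u\rangle$ linear in $u$ for a nonzero constant vector $a \in \bR^k$ (standard for a generating family linear at infinity) and $\epsilon$ supported on a set $K_\epsilon$ proper over $M$. Writing $\phi(w) := D_h(w) - w$, compactly supported in $[-2,2]$, one obtains
\[
f_{dbl}(x,u,w) = \ell(x,u) + w + \epsilon(x,u) + \phi(w),
\]
so that the only obstruction to weak linearity at infinity is the non-proper support of $\phi(w)$ as a function on $M \times \bR^{k+1}$. Since $\Sigma_f \cup K_\epsilon$ is proper over $M$, I would choose a smooth cutoff $\eta \colon M \times \bR^k \to [0,1]$ with $\eta \equiv 1$ on a neighborhood of $\Sigma_f \cup K_\epsilon$ and $\mathrm{supp}(\eta)$ proper over $M$. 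Crucially, the transition region $\{0 < \eta < 1\}$ is chosen broad enough to guarantee the pointwise bound $|\partial_u \eta| < |a|/\sup_w |\phi(w)|$.

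The homotopy would then be
\[
f_t(x,u,w) = \ell(x,u) + w + \epsilon(x,u) + \bigl[(1-t) + t\eta(x,u)\bigr]\phi(w).
\]
Clearly $f_0 = f_{dbl}$; on a neighborhood of $\Sigma_{f_{dbl}} = \Sigma_f \times \{\pm 1\}$ the bracket equals $1$ (since $\eta \equiv 1$ there), so $f_t = f_{dbl}$; and $f_1 = \ell + w + \epsilon + \eta \phi$ is weakly linear at infinity because both $\epsilon$ and $\eta\phi$ have support proper over $M$. If strong linearity is wanted, I would invoke the earlier lemma upgrading weak to strong linearity via a further homotopy preserving the critical locus.

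The hard part will be verifying $\Sigma_{f_t} = \Sigma_{f_{dbl}}$ throughout the homotopy. On $\{\eta = 1\}$, $f_t = f_{dbl}$ and there is nothing to check; outside $\mathrm{supp}(\eta)$, $\epsilon = 0$ and $\partial_u f_t = a \neq 0$, so no critical points; in the transition region one again has $\epsilon = 0$, and computing $\partial_u f_t = a + t \phi(w) \partial_u \eta$ the quantitative bound on $|\partial_u \eta|$ yields $|\partial_u f_t| \geq |a| - t|\phi(w)||\partial_u \eta| > 0$, eliminating any spurious critical points. This quantitative choice of transition width is the technical crux: a naive cutoff would introduce a $(\dim M)$-parameter family of extraneous critical points coming from the solvability of $\partial_w f_t = 0$ at some $w \in (-1,1)$ in the transition region, and only a sufficiently broad transition kills them.
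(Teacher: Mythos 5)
Your proof is correct, and it reaches the conclusion by a route that is dual to the paper's rather than identical to it. The paper keeps $D_h(w)$ intact and instead damps the nonlinear part $\epsilon(x,u)$ of $f$ by a cutoff $\rho(x,w)$ in the $w$-variable, taking $f_t = (t+(1-t)\rho(x,w))\epsilon(x,u) + L(x,u) + D_h(w)$; spurious critical points are excluded by controlling the \emph{$w$-derivative} $\partial_w f_t = (1-t)\partial_w\rho\cdot\epsilon + D_h'(w)$ on $|w|>2$, where $D_h'\equiv 1$, via a bound on $|\partial_w\rho|$ against $\sup|\epsilon|$. Since $\mathrm{supp}(\rho\epsilon)$ is proper over $M$ in both the $u$- and $w$-directions, this lands directly on a strongly linear function. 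You instead keep $f$ intact, damp $\phi = D_h - w$ by a cutoff $\eta(x,u)$ in the $u$-variables, and exclude spurious critical points by controlling the \emph{$u$-derivative} against the slope of the linear part; you then land only on a weakly linear function and must invoke the earlier weak-to-strong lemma, which is legitimate given the paper's stated conventions. Two caveats about your version. First, it genuinely uses that $a = \partial_u L(x,\cdot)$ is nonvanishing (with a locally uniform lower bound over $M$) to absorb $\sup|\phi|\cdot|\partial_u\eta|$; this holds in the situations the paper cares about, since if $a(x_0)=0$ the entire fiber over $x_0$ outside $\mathrm{supp}\,\epsilon$ would be fiberwise critical and $\Sigma_f$ would fail to be proper over $M$, but it is an extra input that the paper's $w$-direction argument never needs, as that argument only touches $\partial_w$. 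Second, $L(x,\cdot)$ may depend on $x$, so $a$ is a covector field rather than a constant vector; this is harmless because the transition width of $\eta$ can be chosen locally over $M$ while keeping $\mathrm{supp}\,\eta$ proper. Your diagnosis of the danger --- that $\partial_w f_t = 0$ is solvable at interior values of $w$ in the transition region, so one must kill critical points through the $u$-derivative there --- is exactly right and is the point a naive cutoff misses.
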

\begin{proof}
    Suppose $f(x, u) = \epsilon(x, u) + L(x, u)$ where $L$ is linear and $\pi_M: \mathrm{supp}(\epsilon) \rightarrow M$ is proper. Consider a cut-off function $\rho: M \times \bR \rightarrow [0, 1]$ such that $\rho|_{M \times [-2, 2]} \equiv 1$ and $\pi_M: \mathrm{supp}(\rho) \to M$ is proper. Define 
    $$f_t = (t + (1 - t)\rho(x, w))\epsilon(x, u) + L(x, u) + D_h(w).$$
    We know that $\Sigma_{f_t} \cap (M \times \bR^k \times [-2, 2]) = \Sigma_{f_{dbl}}$. It suffices to show that $\Sigma_{f_t} \cap (M \times \bR^k \times (-\infty, -2) \cup (2, +\infty)) = \varnothing$. Actually, when $w \in (-\infty, -2) \cup (2, +\infty)$, we have
    $$\partial_w f_t = (1 - t) \partial_w \rho(x, w) \epsilon(x, u) + D_h'(w) = (1 - t) \partial_w \rho(x, w) \epsilon(x, u) + 1.$$
    Choose $\rho: M \times \bR \to \bR$ such that $\sup_{w \in \bR}|\partial_w \rho(x, w)| \leq \sup_{u \in \bR^k}|\epsilon(x, u)|$. Then $\partial_w f \neq 0$ for $w \in (-\infty, -2) \cup (2, +\infty)$, which means that $\Sigma_{f_t} \cap (M \times \bR^k \times (-\infty, -2) \cup (2, +\infty)) = \varnothing$. Thus, $\Sigma_{f_{t}} = \Sigma_{f_{dbl}}$ and $f_t$ is the required homotopy.
\end{proof}

    Given the above lemma, we will not distinguish $f_{dbl}(x, u, w) = f(x, u) + D_h(w)$ and its linear at infinity deformation.

\begin{theorem}\label{thm:double}
    Let $L \hookrightarrow J^1(M \times \bR_{>0})$ be an embedded conical Legendrian cobordism between closed embedded Legendrians from $\Lambda_-$ to $\Lambda_+ \hookrightarrow J^1M$.

    Then there exists a generating family linear at infinity $F_\epsilon: M \times \bR_{>0} \times \bR^{k+1} \rightarrow \bR$ on $T_{-\epsilon}(L) \cup T_\epsilon(L)$ for $\epsilon > 0$ sufficiently small if and only if there exists a generating family linear at infinity $f_-: M \times \bR^{k} \rightarrow \bR$ which extends to a germ of generating family on ${L}$.

    Moreover, when the conditions hold, we can define $F_\epsilon: M \times \bR_{>0} \times \bR^{k+1} \rightarrow \bR$ such that
    $$(F_\epsilon|_{M \times (0, s_-) \times \bR^k})(x, s, u, v, w) = s{f}_-(x, u) + D_\epsilon(w),$$
\end{theorem}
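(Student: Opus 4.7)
The forward direction is immediate: restricting a linear-at-infinity generating family on $T_{-\epsilon}(L) \cup T_\epsilon(L)$ to the conical end $s \in (0, s_-)$ gives a linear-at-infinity generating family on $T_{-\epsilon}(\Lambda_- \times \bR_{>0}) \cup T_\epsilon(\Lambda_- \times \bR_{>0})$, from which one extracts $f_-$ by fixing $s$ and selecting one of the two $w$-components; the extension of $f_-$ to a germ on $L$ is then given by the restriction of $F_\epsilon$ itself.

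For the backward direction, which is the main content, my plan is to mimic the doubling construction of \cite{TwistGF}, with the negative end playing the role of the seed. Starting from Chekanov's linear-at-infinity generating family $F_0(x, s, u) = s f_-(x, u)$ for the conical Legendrian $\Lambda_- \times \bR_{>0}$, I apply Proposition \ref{prop:germ-extend} with $L_0$ taken to be the conical portion at the negative end to obtain, after stabilization, a germ of generating family $\wt{F}$ on all of $L$ which agrees with $s f_-(x, u)$ for $s < s_-$. Doubling in a new variable $w$ then produces a germ of generating family
\[
G(x, s, u, w) = \wt{F}(x, s, u) + D_\epsilon(w)
\]
for $T_{-\epsilon}(L) \cup T_\epsilon(L)$ which, on the conical end, coincides with the globally defined, linear-at-infinity generating family $s f_-(x, u) + D_\epsilon(w)$ of $T_{-\epsilon}(\Lambda_- \times \bR_{>0}) \cup T_\epsilon(\Lambda_- \times \bR_{>0})$ (after the weakly-linear-at-infinity deformation supplied by the earlier lemmas).

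It remains to cut off the germ $G$ to a globally defined linear-at-infinity function $F_\epsilon$ with the same fiberwise critical locus. I plan to fix a smooth cutoff $\chi$ on $M \times \bR_{>0} \times \bR^{k+1}$ supported in a small tubular neighborhood of $\Sigma_G$ (inside the domain of $\wt{F}$), equal to $1$ on a still smaller neighborhood, and with $\pi_{M \times \bR_{>0}} : \mathrm{supp}(\chi) \to M \times \bR_{>0}$ proper, and set
\[
F_\epsilon(x, s, u, w) = \chi \cdot \wt{F}(x, s, u) + (1 - \chi) \cdot s f_-(x, u) + D_\epsilon(w).
\]
This agrees with $G$ near $\Sigma_G$ and reduces to $s f_-(x, u) + D_\epsilon(w)$ outside $\mathrm{supp}(\chi)$, hence is linear at infinity. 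To see $\Sigma_{F_\epsilon} = \Sigma_G$, I rely on the fact that $\wt{F} - s f_-(x, u)$ vanishes on the conical end and, by Proposition \ref{prop:germ-extend} together with the stabilization lemmas above, can be arranged to be uniformly small and compactly supported in the $(x, s)$-direction, so that in the transition region the fiberwise derivatives of $F_\epsilon$ are dominated by those of the linear-at-infinity model $s f_-(x, u) + D_\epsilon(w)$, in particular by $D_\epsilon'(w)$ for $|w| \geq 2$ and by $\partial_u(s f_-)$ at $u$-infinity.

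The main obstacle I anticipate is precisely this cutoff estimate. In the closed-Legendrian setting of \cite{TwistGF} one exploits a Weinstein tubular neighborhood of positive radius, which is unavailable for the non-compact cobordism $L$; the substitute here is the conical normal form at the negative end combined with the hypothesis that $f_-$ is already linear at infinity, which reduces the derivative estimates to a compact region in $(x, s)$. Once the cutoff is in place, the prescribed normal form $F_\epsilon|_{M \times (0, s_-) \times \bR^k} = s f_-(x, u) + D_\epsilon(w)$ holds automatically because $\wt{F} = s f_-$ on the conical end, so the value of $\chi$ there is immaterial.
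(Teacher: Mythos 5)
Your forward direction and your identification of where the linear-at-infinity hypothesis on $f_-$ enters are both reasonable, but the cutoff step in the backward direction has a genuine gap that breaks the construction. The function you write down,
$$F_\epsilon = \chi\,\wt{F} + (1-\chi)\, s f_-(x,u) + D_\epsilon(w),$$
reduces outside $\mathrm{supp}(\chi)$ to $s f_-(x,u) + D_\epsilon(w)$, and this background function is itself a generating family for the \emph{full doubled cone} $T_{-\epsilon}(\Lambda_-\times\bR_{>0})\cup T_\epsilon(\Lambda_-\times\bR_{>0})$ over all of $\bR_{>0}$: it has fiberwise critical points over the cone of $\Lambda_-$ at every height $s>0$. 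For $s>s_-$ the cobordism $L$ has moved away from that cone, so these critical points lie outside any small tubular neighbourhood of $\Sigma_G$ and are not removed by $\chi$; they are spurious, and your $F_\epsilon$ generates extra Legendrian components rather than $T_{-\epsilon}(L)\cup T_\epsilon(L)$. Relatedly, the claim that $\wt{F}-sf_-$ ``can be arranged to be uniformly small and compactly supported in the $(x,s)$-direction'' cannot hold: for $s>s_+$ the germ $\wt{F}$ must generate the cone over $\Lambda_+$, which is in general nowhere close to $sf_-$.

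The paper avoids this by cutting off in the $w$-profile rather than interpolating between $\wt{F}$ and $s f_-$: it takes an extension $f$ of the germ (set to $0$ off a neighbourhood $U$ of $L$, with $f=sf_-$ for $s<s_-$) and defines $F_\epsilon = f + w + \alpha\,(D_\epsilon(w)-w)$, where $\alpha\equiv 1$ near $L$ and on all of $M\times(0,s_-)\times\bR^k$. Away from $L$ one then has $\partial_w F_\epsilon = 1$, so there are no fiberwise critical points at all, and the only estimate needed is that the cross term $(D_\epsilon(w)-w)\,\partial_u\alpha$ is dominated by $\partial_u f$ in the transition region $\{0<\alpha<1\}$; this holds for $s\ge s_-$ by conicality and compactness of $\Lambda_+$, and is vacuous for $s<s_-$ where $\alpha\equiv 1$ --- which is exactly where the linear-at-infinity hypothesis on $f_-$ is consumed. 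Your proposal would need to be restructured along these lines; as written, the claimed identity $\Sigma_{F_\epsilon}=\Sigma_G$ is false.
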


\begin{figure}
  \centering
  \includegraphics[width=0.8\textwidth]{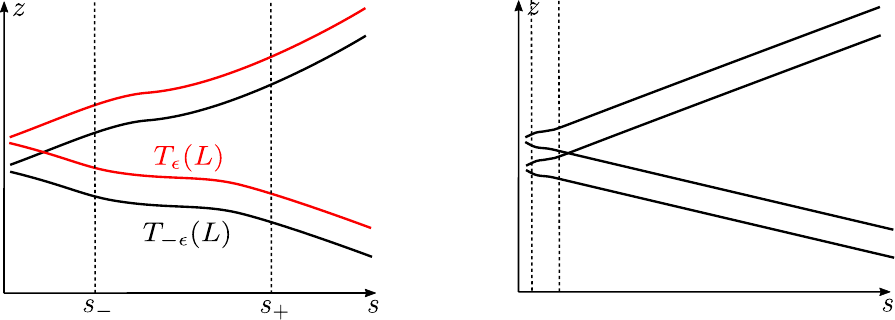}\\
  \caption{Left: the double copy Legendrian $T_{-\epsilon}(L) \cup T_\epsilon(L)$ in Theorem \ref{thm:double}. Right: the image of the double copy under the inverse Liouville flow $T_{-\epsilon}(\varphi'_{h/\epsilon}(L)) \cup T_\epsilon(\varphi'_{h/\epsilon}(L))$ in Theorem \ref{thm:largepushoff}. The regions between the dashed lines are the nonconical part of the Legendrian cobordism.}\label{fig:double}
\end{figure}

    Let us recall the theorem of Abouzaid-Courte-Guillermou-Kragh \cite{TwistGF}*{Corollary 3.12 \& Theorem 3.13}. They proved that given a germ of generating family on a closed Legendrian embedding $L \hookrightarrow J^1M$, there is a generating family linear at infinity on the doubling $T_{-\epsilon}(L) \cup T_\epsilon(L)$ when $\epsilon > 0$ is small. Their theorem does not hold for arbitrary noncompact Legendrian embeddings (even with conical conditions outside a compact subset).

    Consider the germ of generating families $F(x, s, u)$ where $F(x, s, u) = sf_-(x, u)$ at the negative end and $F(x, s, u) = sf_+(x, u)$ at the positive end. When $s > 0$ is sufficiently small, we will lose control on the derivative of $F(x, s, u) = sf_-(x, u)$, and may therefore introduce extra critical points to the generating family when considering the doubling construction. We avoid the issue by assuming the existence of a generating family linear at infinity at the negative end.

\begin{proof}[Proof of Theorem \ref{thm:double}]
    Consider the extension of the germ of generating family $f$ defined on an open neighbourhood $U$ of $L$ such that
    $$(f|_{M \times (0, s_-) \times \bR^k})(x, s, u, v) = sf_-(x, u).$$
    Since $L \hookrightarrow J^1(M \times \bR_{>0})$ is conical outside $J^1(M \times (s_-, s_+))$, we may choose the extension of the germ of functions on $M \times (s_+, +\infty) \times \bR^k$ such that
    $$(f|_{M \times (s_+, +\infty) \times \bR^k})(x, s, u, v) = sf_+(x, u, v)$$
    for some function $f_+: M \times \bR^k \rightarrow \bR$ defined on an open subset.

    Consider a open refinement $V \subset U$ of $L$ and a cut-off function $\alpha: M \times \bR_{>0} \times \bR^k \rightarrow [0, 1]$ such that $\alpha|_{M \times (0, s_-) \times \bR^k} \equiv 1$, $\alpha|_{V} \equiv 1$ and the projection
    $$\overline{\alpha^{-1}((0, 1])} \cap M \times [s_-, +\infty) \times \bR^k \rightarrow M \times [s_-, +\infty)$$
    is a proper map. Consider an arbitrary extension of $f$ from $V$ to $M \times \bR_{>0} \times \bR^k$ such that $f = 0$ in the complement of $U$. Define the function
    $$F_\epsilon(x, s, u, w) = f(x, s, u) + w + \alpha(x, s, u)(D_\epsilon(w) - w).$$
    We check that $F_\epsilon: M \times \bR_{>0} \times \bR^{k+1} \rightarrow \bR$ generates $T_{-\epsilon}(L) \cup T_\epsilon(L)$.

    When $s < s_-$, we know that
    $$F_\epsilon(x, s, u, w) = sf_-(x, u) + D_\epsilon(w).$$
    When $s_- \leq s \leq s_+$, considering the region $\overline{\alpha^{-1}((0, 1))} \cap M \times [s_-, s_+] \times \bR^k$, we may assume that $|\partial_{u}f| \neq 0$ and there exist uniform $c_1, c_2 > 0$ such that
    $$|\partial_{u}f| \geq c_1, \; |\partial_{u}\alpha| \leq c_2.$$
    Note that $D_\epsilon(w) - w \rightarrow 0$ when $\epsilon \rightarrow 0$. This shows that for $\epsilon > 0$ sufficiently small, $\partial_{u}f + (D_\epsilon(w) - w)\partial_{u}\alpha \neq 0$ when $\alpha(x, s, u) \neq 1$. Finally, when $s \geq s_+$,
    $$F_\epsilon(x, s, u, w) = sf_+(x, u) + w + \alpha(x, u, s)(D_\epsilon(w) - w).$$
    Since $\Lambda_+$ is closed, we may assume that the neighbourhood $U$ has positive radius with respect to the product metric on $M \times \bR_{>0} \times \bR^k$. Therefore, there exist uniform $c_1', c_2' > 0$ such that
    $$|\partial_{u}f_+| \geq c_1', \; |\partial_{u}\alpha| \leq c_2'.$$
    This shows that for $s > s_+$ and $\epsilon > 0$ sufficiently small, $|s\partial_u f_+| > |(D_{\epsilon}(w) - w) \partial_u \alpha|$, and hence $s\partial_{u}f_+ + (D_\epsilon(w) - w)\partial_{u}\alpha \neq 0$ when $\alpha(x, s, u) \neq 1$. This shows that the critical points of $F_\epsilon$ are located on the locus $\alpha^{-1}(1) \times \bR_w$ and hence generates  $T_{-\epsilon}(L) \cup T_\epsilon(L)$. Since
    $$\overline{f^{-1}_+((0, +\infty))} \cap M \times [s_-, +\infty) \times \bR^k \rightarrow M \times [s_-, +\infty)$$
    is proper, we can conclude that $F_\epsilon$ is linear at infinity.
\end{proof}
\begin{remark}
    One may notice that the uniform estimate $|\partial_{u}f_-| \geq c_1', \; |\partial_{u}\alpha| \leq c_2'$ still holds at the negative end. However, since $s \in (0, s_-)$ may be sufficiently small, we can no longer show that $|s\partial_u f_-| > |(D_{\epsilon}(w) - w) \partial_u \alpha|$.This is why we need a given generating family linear at infinity at the negative end to start with.
\end{remark}

    Since $\partial_u F = 0$ on the critical locus $\Sigma_F$, we can shrink the radius of the neighbourhood defining the germ $F$ to get the estimation on the derivatives. However, as the radius of the neighbourhood shrinks at the negative end, when trying to stack two copies of the germ $F$ using the doubling function $D_\epsilon$, there is going to be a gap between two copies of the neighbourhoods and it will be impossible to understand the behaviour in the gap. 

    On the contrary, when there is a Weinstein tubular neighbourhood of uniform positive radius, one can construct a generating family on the doubling of a non-uniform Reeb push-off and deform it to a generating family on a uniform Reeb push-off $T_{-\epsilon}(L) \cup T_\epsilon(L)$ inside the Weinstein tubular neighbourhood using contact isotopies. This obstruction has been explained by the author in the construction of microlocal sheaves \cite[Section 3]{LiCobordism2}.

\subsection{Homotopy lifting of generating families}
    We would like to separate the Legendrian front projection of $T_{-\epsilon}(L)$ and $T_\epsilon(L)$ in $M \times \bR_{>0} \times \bR$ using contact Hamiltonian isotopies in order to get a generating family on a single copy of $L$. This requires the homotopy lifting property of generating families.

    Homotopy lifting property for generating families of Lagrangian submanifolds is proved by Sikorav \cite{Sikorav}*{Proposition 1.2 \& 1.7} which also appears in \cite{GroEliashGF}*{Proposition 3.1.5}. In fact, the homotopy lifting is unique up to stabilization; see \cite{ViterboGen}*{Proposition 1.5}, \cite{TheretViterbo}*{Theorem 3.2} and \cite{JordanTraynor}*{Theorem 3.5}. Here we recall the case for Legendrian submanifolds.

\begin{theorem}[Chekanov \cite{ChekanovGF}*{Proposition 3.4, Lemma 5.6 \& Theorem 6.1}; Chaperon \cite{Chaperon}*{Theorem 4}; \cite{GroEliashGF}*{Proposition 4.1.1}; \cite{JordanTraynor}*{Theorem 1.2}]
    Let $\varphi: L \rightarrow J^1M$ be a Legendrian immersion and $f: M \times \bR^n \rightarrow \bR$ be a generating family for $L$ (linear at infinity). Let $\theta_\lambda$, $\lambda \in [0, 1]$, be a compactly supported contact isotopy on $J^1M$. Then there exist compactly supported functions $\eta_\lambda: M \times \bR^n \times \bR^{2m} \rightarrow \bR$ such that the generating family (linear at infinity)
    $$F_\lambda(x, u, p_1, q_1, \dots, p_m, q_m) = f(x, u) + p_1q_1 + \dots + p_mq_m + \eta_\lambda(x, u, p_1, q_1, \dots, p_m, q_m)$$
    generates $\theta_\lambda \circ \varphi: L \rightarrow J^1M$.
\end{theorem}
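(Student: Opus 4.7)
The plan is to follow the classical Chaperon--Chekanov discretization method. Since $J^1M$ is a contact manifold on which generating functions behave well under ``small time'' flow, the natural strategy is to chop $\theta_\lambda$ into many short pieces and apply a local lifting result iteratively.

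First, I would partition $[0, 1]$ finely and write $\theta_\lambda = \theta_\lambda^{(m)} \circ \cdots \circ \theta_\lambda^{(1)}$ as a composition of contact isotopies, each of which is uniformly $C^1$-close to the identity. Since $\{\theta_\lambda\}_{\lambda \in [0,1]}$ is a compact family of compactly supported isotopies, a single partition can be chosen that works for all $\lambda$, and each $\theta_\lambda^{(i)}$ remains compactly supported.

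Second, I would establish the lifting result for a single $C^1$-small factor. Using local canonical coordinates on a neighbourhood of the diagonal $\Delta \subset J^1M \times J^1M$ (i.e.\ the standard $(p, q)$-coordinates coming from the fact that the conormal of $\Delta$ gives a tubular-neighbourhood normal form), a contact transformation $\theta$ that is $C^1$-small admits a discrete generating function of the form $\psi(x, p, q) = pq + \eta(x, p, q)$, where $\eta$ is compactly supported. Indeed, the contact graph of $\theta$ in the appropriate contactification is a Legendrian submanifold, and $C^1$-smallness ensures graphicality over the $(x, p, q)$-base, exhibiting $\psi$ as its generating function. The identity is generated by $pq$ exactly, so the deviation $\eta$ is compactly supported whenever $\theta$ is.

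Third, I would iteratively compose these generating functions with $f$ via the standard fiber-sum construction. Each application of $\theta_\lambda^{(i)}$ adds a new pair of variables $(p_i, q_i)$ and a compactly supported perturbation $\eta_\lambda^{(i)}$, producing
\[
F_\lambda(x, u, p_1, q_1, \dots, p_m, q_m) = f(x, u) + \sum_{i=1}^{m} p_i q_i + \eta_\lambda(x, u, p_1, q_1, \dots, p_m, q_m),
\]
where $\eta_\lambda$ aggregates the $\eta_\lambda^{(i)}$ after the obvious coordinate identifications. A direct check on the fiberwise critical locus confirms that $F_\lambda$ generates $\theta_\lambda \circ \varphi(L)$. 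Because each $\eta_\lambda^{(i)}$ is compactly supported in all variables, so is $\eta_\lambda$, and the linear at infinity property of $f$ is inherited by $F_\lambda$: the quadratic $p_iq_i$ terms are absorbed into the $g(x, u_1)$ piece of the weakly linear at infinity decomposition, while the linear behaviour of $f$ in its $u$-variables persists.

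The main obstacle is the second step---producing the compactly supported perturbation $\eta$ for a $C^1$-small contact isotopy and verifying that no spurious critical points appear outside the expected locus. This is a local model computation that relies on the explicit form of the identity generating function $pq$ and on uniform $C^1$-estimates that ensure $\partial_p\psi$ and $\partial_q\psi$ do not vanish outside the support of $\eta$. Once this local lifting is established, the iteration is formal and smoothness in $\lambda$ follows from the $\lambda$-smoothness of the partition choice.
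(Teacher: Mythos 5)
The paper does not prove this statement: it is quoted as a classical result with references to Chekanov, Chaperon, Eliashberg--Gromov and Jordan--Traynor, and the only original content nearby is the remark that compact support of the isotopy is what keeps $\eta_\lambda$ compactly supported. Your outline is precisely the standard Chaperon--Chekanov broken-trajectory argument that underlies those cited proofs (discretize the isotopy into $C^1$-small factors, generate each factor by $pq+\eta$ with $\eta$ compactly supported, compose by fiber sum), so as a reconstruction of the classical proof it is on target rather than a different route.

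Two points deserve more care than your sketch gives them. First, in the contact setting the local lifting step is genuinely more delicate than in the symplectic one: a contactomorphism preserves the contact form only up to a conformal factor, so the generating-function composition formula for Legendrians must track the $z$-coordinate and this conformal rescaling; this is exactly the content of Chekanov's Lemma 5.6 and is not a routine transcription of the Lagrangian case. You defer it honestly, but it is the entire technical heart of the theorem, together with the verification that no spurious fiberwise critical points arise outside the support of $\eta$. Second, your claim that the $p_iq_i$ terms are ``absorbed'' so that linearity at infinity persists is only correct in the \emph{weakly} linear at infinity framework the paper sets up (the new terms are quadratic, not linear, in the added variables); the paper's earlier lemmas on stabilization with compact cut-off are what license not distinguishing the two, and your argument should invoke them explicitly rather than assert the inheritance directly.
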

\begin{remark}
    In fact, whenever $\theta_\lambda$, $\lambda \in [0, 1]$, is a contact isotopy that has bounded $C^1$-norm with respect to some complete (adapted) metric, we can prove the above homotopy lifting property, but the function $\eta_\lambda: M \times \bR^n \times \bR^{2m} \rightarrow \bR$ will typically be non-compactly supported, which introduces extra difficulty to analyze the behaviour of the generating family at infinity. Therefore, we will only work with the compactly supported version.
\end{remark}

    In the similar construction for microlocal sheaves \cite[Section 3]{LiCobordism2}, following \cite{Gui} and \cite{TwistGF}, we separate the Legendrian fronts of $T_{-\epsilon}(L)$ and $T_\epsilon(L)$ by applying a contact isotopy that sends $T_{-\epsilon}(L) \cup T_\epsilon(L)$ by $T_{-h}(L) \cup T_h(L)$ for $r \gg 0$ using a cut-off of the Reeb flow. However, as $L$ is noncompact, the contact isotopy is not compact, and as explained in the remark, it will be difficult to control the behaviour of generating families at infinity.

    Our solution is to use the contact lift of the Liouville vector field on $J^1M \times \bR_{>0} \cong T^*(M \times \bR_{>0})$, which will push the non-conical part of the double copies of the Legendrian fronts $T_{-\epsilon}(L) \cup T_\epsilon(L)$ to the negative end where the fronts are already separated.

    Consider the local coordinates $(x, s; \xi, \sigma; z) \in J^1(M \times \bR_{>0})$. Consider the Liouville vector field $Z = s \partial_s$ on $J^1M \times \bR_{>0}$. This lifts to a contact vector field in $(J^1M \times \bR_{>0}) \times \bR$
    $$Z' = s \partial_s + w \partial_w,$$
    defined by the Hamiltonian $H'(x, \xi, t; s; w) = w.$
    Consider the contactomorphism $(J^1M \times \bR_{>0}) \times \bR \cong J^1(M \times \bR_{>0})$ and the coordinates $(x, s; y, t; z) \in J^1(M \times \bR_{>0})$. The contact vector field is then given by
    $$Z' = s \partial_s + y \partial_y + z \partial_z$$
    which is defined by the Hamiltonian $H'(x, s, y, t, z) = z - st$. We let $\varphi_\lambda: J^1M \times \bR_{>0} \to J^1M \times \bR_{>0}$ be the Liouville flow, and let $\varphi_\lambda' : J^1(M \times \bR_{>0}) \rightarrow J^1(M \times \bR_{>0})$ be the contact lifting of the Liouville flow defined by the Hamiltonian $H'$.

    Although the Liouville flow $\varphi_\lambda$ preserves the conical ends of exact Lagrangian cobordisms, in general it will change the primitive of the exact Lagrangian, so that the contact lifting $\varphi_\lambda'$ no longer preserves the conical ends of the Legendrian cobordisms. We implement this by introducing some other ambient contact isotopy.

    Suppose the conical Legendrian cobordism $L \hookrightarrow J^1(M \times \bR_{>0})$ has conical ends
    \begin{align*}
    L \cap J^1(M \times (0, s_-)) &= \{(x, s; s\xi, t; st + w_{0,-}) \mid (x, \xi, t) \in \Lambda_-, s \in (0, s_-) \}, \\
    L \cap J^1(M \times (s_+, \infty)) &= \{(x, s; s\xi, t; st + w_{0,+}) \mid (x, \xi, t) \in \Lambda_+, s \in (s_+, \infty) \}.
    \end{align*}
    Consider a smooth function $w_0: \bR_{>0} \to \bR$ such that $w_0(s) = w_{0,-}$ when $s < s_-$ and $w_0(s) = w_{0,+}$ when $s > s_+$. Let $T_{\lambda w_0}$ be the contact flow induced by the Hamiltonian $w_0(s)$ for time $\lambda \in \bR$, whose contact vector field is
    $$X_{w_0} = w_0'(s) \partial_t + w_0(s) \partial_z.$$
    Note that in fact $T_{\lambda w_0}$ commutes with the Reeb flow $T_\lambda$.

\begin{lemma}\label{lem:isotopy}
   Let $\varphi_\lambda$ be the Liouville flow and $\varphi_\lambda'$ be the contact lifting of the Liouville flow. Let $L \hookrightarrow J^1(M \times \bR_{>0})$ be a conical Legendrian cobordism with no Reeb chords. Then there exists a compactly supported Hamiltonian isotopy between $T_{-\epsilon}(L) \cup T_\epsilon(L)$ and $T_{-\epsilon} \circ T_{(1 - e^\lambda)w_0}(\varphi_{-\lambda}'(L)) \cup T_\epsilon \circ T_{(1 - e^\lambda)w_0}(\varphi_{-\lambda}'(L))$ for any given $\lambda \in \bR$.
\end{lemma}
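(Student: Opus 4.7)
The plan is to realize the target Legendrian as the image of $L$ under the ambient contact isotopy $\psi_\lambda := T_{(1-e^\lambda)w_0} \circ \varphi_{-\lambda}'$, to verify that this isotopy preserves the conical ends of $L$ setwise, and then to upgrade the restricted Legendrian isotopy on $L$ to a compactly supported contact Hamiltonian isotopy on the ambient manifold whose support splits into two disjoint pieces, one around each Reeb copy.

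First, I would verify directly from the formulas that $\psi_\lambda(L)$ agrees with $L$ set-theoretically outside a compact set. Writing a point of the negative conical end as $(x, s; s\xi, t; st + w_{0,-})$ with $s$ small, the contact lift $\varphi_{-\lambda}'$ rescales $s$, $y$, and $z$ by a common factor, producing a point whose $z$-coordinate differs from the conical normal form by a multiple of $w_{0,-}$. The correction $T_{(1-e^\lambda)w_0}$, which shifts $z$ by $(1-e^\lambda)w_0(s)$ and $t$ by $(1-e^\lambda)w_0'(s)$, is arranged precisely so that on the region where $w_0$ is locally constant this shift cancels the discrepancy and restores the prescribed primitive $w_{0,-}$; the same calculation works at the positive end with $w_{0,+}$. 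Hence there exists a compact set $K \subset J^1(M \times \bR_{>0})$ with $\psi_\mu(L) \setminus K = L \setminus K$ for all $\mu \in [0, \lambda]$.

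Second, I would extract from $\psi_\mu|_L$ a compactly supported contact Hamiltonian isotopy $\phi_\mu$ with $\phi_\mu(L) = \psi_\mu(L)$. Although $\psi_\mu$ acts by a nontrivial reparametrization of $L$ on the conical ends, one can precompose with a smooth family of self-diffeomorphisms of $L$ to obtain a Legendrian isotopy of $L$ that equals the inclusion outside $K$. The standard isotopy extension theorem for compactly supported Legendrian isotopies then produces an ambient compactly supported contact Hamiltonian isotopy $\phi_\mu$ with the desired property, and one can arrange its time-dependent generating Hamiltonian $\widetilde H_\lambda$ to be supported in an arbitrarily small tubular neighbourhood $V$ of $L$ (uniformly in $\mu$).

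Third, I would double up using the no Reeb chord assumption. Since $L$ has no Reeb chords, for $\epsilon > 0$ sufficiently small one can choose $V$ so that $T_{-\epsilon}(V) \cap T_{\epsilon}(V) = \varnothing$. The Reeb flow $T_{\pm\epsilon}$ is a strict contactomorphism, so the Hamiltonian $\widetilde H_\lambda \circ T_{-\epsilon}$ generates the conjugated flow $T_{\epsilon} \circ \phi_\lambda \circ T_{-\epsilon}$, which sends $T_{\epsilon}(L)$ to $T_{\epsilon}(\psi_\lambda(L))$; likewise $\widetilde H_\lambda \circ T_{\epsilon}$ sends $T_{-\epsilon}(L)$ to $T_{-\epsilon}(\psi_\lambda(L))$. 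Their supports lie in the disjoint sets $T_{\epsilon}(V)$ and $T_{-\epsilon}(V)$, so the sum
\[
H^{\mathrm{dbl}}_\lambda := \widetilde H_\lambda \circ T_{-\epsilon} + \widetilde H_\lambda \circ T_{\epsilon}
\]
is a well-defined, time-dependent, compactly supported contact Hamiltonian whose flow simultaneously carries $T_{-\epsilon}(L) \cup T_{\epsilon}(L)$ to $T_{-\epsilon}(\psi_\lambda(L)) \cup T_{\epsilon}(\psi_\lambda(L))$, yielding the desired compactly supported Hamiltonian isotopy.

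The main obstacle is the second step: verifying that the action of $\psi_\mu$ on the conical ends of $L$ can be absorbed into a self-reparametrization of the Legendrian, so that cutting off the generating Hamiltonian outside a compact neighbourhood of $L$ does not alter the Legendrian image. Once this cut-off and the tubular neighbourhood $V$ are chosen compatibly with the Reeb chord condition, the doubling construction is routine.
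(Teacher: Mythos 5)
Your proposal is correct and follows essentially the same route as the paper: verify from the explicit formula for $T_{(1-e^\lambda)w_0}\circ\varphi'_{-\lambda}$ that the induced isotopy of $L$ is stationary outside a compact set (because $w_0$ is locally constant on the conical ends), use the absence of Reeb chords to keep the two push-offs disjoint, and conclude by the isotopy extension theorem. The only difference is organizational: the paper applies isotopy extension once to the embedded double copy $T_{-\epsilon}(L)\cup T_\epsilon(L)$ and checks its embeddedness throughout the deformation via the Lagrangian projection $\pi\circ\varphi'_{-\lambda}(L)=\varphi_{-\lambda}(\pi(L))$, whereas you extend on a single copy and then sum two Reeb-conjugated Hamiltonians with disjoint supports, which yields the same conclusion.
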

\begin{proof}
    First, we show that $T_{-\epsilon} \circ T_{(1 - e^\lambda)w_0}(\varphi_{-\lambda}'(L)) \cup T_\epsilon \circ T_{(1-e^\lambda)w_0}(\varphi_{-\lambda}'(L))$ defines a Legendrian isotopy. Since this is obviously a family of Legendrian immersions, it suffices to show that this is always a family of Legendrian embedding. Consider the projection $\pi: J^1(M \times \bR_{>0}) \to T^*(M \times \bR_{>0})$. Then since $\varphi_\lambda'$ is the contact lifting of $\varphi_\lambda$,
    $$\pi \circ \varphi_{-\lambda}'(L) = \varphi_{-\lambda} \circ \pi(L).$$ 
    In particular, since $\pi(L)$ is an embedded Lagrangian, there are no Reeb chords on the Legendrian lift $\varphi_{-\lambda}'(L)$. This implies that $T_{-\epsilon}(\varphi_{-\lambda}'(L)) \cap T_\epsilon(\varphi_{-\lambda}'(L)) = \varnothing$, and hence $T_{-\epsilon} \circ T_{(1 - e^\lambda)w_0}(\varphi_{-\lambda}'(L)) \cap T_\epsilon \circ T_{(1 - e^\lambda)w_0}(\varphi_{-\lambda}'(L)) = \varnothing$. We thus get a Legendrian isotopy.

    Then, we prove show the Legendrian isotopy extends to a compactly supported Hamiltonian isotopy. We can compute that
     \begin{equation*}
    \resizebox{\textwidth}{!}{$T_{(1-e^\lambda)w_0} (\varphi_{-\lambda}'(x, s; y, t; z)) = \Big(x, e^{-\lambda} s; e^{-\lambda} y, t + (1 - e^{-\lambda}) w'_0(e^{-\lambda} s); w_0(e^{-\lambda} s) + e^{-\lambda} (z - w_0(e^{-\lambda} s)) \Big).$}
    \end{equation*}
    Since $w_0(s) = w_{0,-}$ for $s < s_-$ and $w_0(s) = w_{0,+}$ for $s > s-+$, we know that  for $s_-' < s_-$ sufficiently small and $s_+' > s_+$ sufficiently large,
    $$T_{(1-e^\lambda)w_0}(\varphi_{-\lambda}'(L)) \cap J^1(M \times (0, s_-') \cup (s_+', \infty)) = L \cap J^1(M \times (0, s_-') \cup (s_+', \infty)).$$
    Thus the isotopy $T_{-\epsilon} \circ T_{(1 - e^{\lambda})w_0}(\varphi_{-\lambda}'(L)) \cup T_\epsilon \circ T_{(1 - e^{\lambda})w_0}(\varphi_{-\lambda}'(L))$ is fixed outside a compact subset. We can cut off the vector field that generates the isotopy such that it is compactly supported. Since the Legendrian isotopy is generated by a compactly supported vector field, one can extend the compactly supported vector field from the Legendrian to its Weinstein neighbourhood as in the proof of the isotopy extension theorem \cite[Theorem 2.6.2]{Geigesbook}, which gives the compactly supported Hamiltonian isotopy.
\end{proof}

    After all the non-conical part of the Legendrian cobordism is pushed to the negative end by a compatly supported Hamiltonian, it suffices to rescale a neighbourhood of the negative end $J^1(M \times (0, s_-))$ to $J^1(M \times (0, \lambda s_-))$.

\begin{lemma}\label{lem:rescale}
    Let $L \hookrightarrow J^1(M \times \bR_{>0})$ be a Legendrian submanifold with generating family $F(x, s, u)$. Consider the contact isotopy $\phi_\lambda: J^1(M \times \bR_{>0}) \to J^1(M \times \bR_{>0})$ given by
    $$\phi_\lambda: (x, s; y, t; z) \mapsto \Big(x, e^\lambda s; e^\lambda y, t + (1 - e^\lambda) w'_0(e^\lambda s); w_0(e^\lambda s) + e^\lambda (z - w_0(e^\lambda s)) \Big).$$
    Then $\phi_\lambda(L)$ is a Legendrian with generating family $w_0(s) + e^\lambda(F(x, e^{-\lambda}s, u) - w_0(s))$.
\end{lemma}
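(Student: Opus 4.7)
The plan is to verify the lemma by a direct computation that matches the Legendrian generated by the proposed family $G_\lambda(x,s,u) = w_0(s) + e^\lambda\bigl(F(x, e^{-\lambda}s, u) - w_0(s)\bigr)$ against the pushforward $\phi_\lambda(L)$. First, I would check that $\phi_\lambda$ is genuinely a contactomorphism: up to the reindexing $\lambda \mapsto -\lambda$, the formula agrees with the expression for $T_{(1-e^\lambda)w_0} \circ \varphi_{-\lambda}'$ computed in the proof of Lemma~\ref{lem:isotopy}, so $\phi_\lambda$ is the composition of the contact lift of the Liouville flow with the Hamiltonian flow generated by $w_0$, both of which are contact.

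Second, I would compute the partial derivatives of $G_\lambda$, abbreviating the arguments of $F$ as $(x, e^{-\lambda}s, u)$:
$$\partial_x G_\lambda = e^\lambda\,\partial_x F,\qquad \partial_s G_\lambda = \partial_s F + (1-e^\lambda)\,w_0'(s),\qquad \partial_u G_\lambda = e^\lambda\,\partial_u F.$$
Thus the fiberwise critical locus of $G_\lambda$ is cut out by $\partial_u F(x, e^{-\lambda}s, u) = 0$, and transversality is preserved since rescaling $\partial_u F$ by the nonvanishing factor $e^\lambda$ does not change the rank of its Jacobian. The Legendrian generated by $G_\lambda$ consequently consists of points
$$\bigl(x,\, s;\, e^\lambda\,\partial_x F,\, \partial_s F + (1-e^\lambda)w_0'(s);\, w_0(s) + e^\lambda(F - w_0(s))\bigr).$$
After substituting $s' = e^{-\lambda}s$ so that $s = e^\lambda s'$, this point is exactly the image of $(x, s';\, \partial_x F(x,s',u),\, \partial_s F(x,s',u);\, F(x,s',u)) \in L$ under $\phi_\lambda$, which finishes the identification.

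I do not expect any substantive obstacle: the argument is essentially chain-rule bookkeeping. The only slightly nontrivial feature is the shift by $w_0$ in the definition of $G_\lambda$, which is forced because $\phi_\lambda$ acts affinely (rather than linearly) on the $z$-coordinate. Equivalently, the Liouville rescaling modifies the primitive of the Lagrangian projection $\pi(L)$ at the conical ends, and the $w_0$ correction in $G_\lambda$ absorbs precisely this modification so that the result remains a generating family.
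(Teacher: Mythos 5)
Your computation is correct and is precisely the direct chain-rule verification that the paper leaves implicit (Lemma~\ref{lem:rescale} is stated there without proof, as a routine calculation). The identification $\phi_\lambda = T_{(1-e^{-\lambda})w_0}\circ \varphi_\lambda'$, the cancellation of the chain-rule factor $e^{-\lambda}$ against the prefactor $e^{\lambda}$ in $\partial_s G_\lambda$, and the observation that the affine $w_0$-shift in the $z$-coordinate forces the corresponding shift in the generating family are all exactly the intended bookkeeping.
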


\begin{theorem}\label{thm:largepushoff}
    Let $L \hookrightarrow J^1(M \times \bR_{>0})$ be a conical Legendrian cobordism with no Reeb chords between closed embedded Legendrians from $\Lambda_-$ to $\Lambda_+ \hookrightarrow J^1M$.

    Then there exists a generating family linear at infinity $F_h: M \times \bR_{>0} \times \bR^{K+1} \rightarrow \bR$ on $T_{-h}(L) \cup T_h(L)$ for any $h > 0$ if and only if there is a generating family linear at infinity $f_-: M \times \bR^{k} \rightarrow \bR$ on $\Lambda_-$ which extends to ${L}$.

    Moreover, when the conditions hold, we can define $F_h: M \times \bR_{>0} \times \bR^{K+1} \rightarrow \bR$ such that
    $$(F_h|_{M \times (0, s_-) \times \bR^K})(x, s, u, v, w) = s\overline{f}_-(x, u, v) + D_h(w),$$
    where $\overline{f}_-(x, u, v)$ is a stabilization of the function $f_-(x, u)$.
\end{theorem}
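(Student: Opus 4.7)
The ``only if'' direction is immediate by restriction: a generating family linear at infinity on $T_{-h}(L) \cup T_h(L)$ can be restricted to a slice $\{s=s_0\}$ in the conical negative end $J^1(M \times (0, s_-))$ and then to one component of the double copy, yielding (up to an inessential constant) a generating family linear at infinity on $\Lambda_-$ which extends to $L$ via the original one.

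For the ``if'' direction, the plan combines three ingredients: the short-pushoff generating family $F_\epsilon$ from Theorem \ref{thm:double}, the compactly supported Hamiltonian isotopy of Lemma \ref{lem:isotopy}, and the rescaling contactomorphism $\phi_\lambda$ of Lemma \ref{lem:rescale}. Given $h > 0$, choose $\epsilon > 0$ small enough that Theorem \ref{thm:double} applies and set $\lambda = \log(h/\epsilon)$. The theorem yields a generating family linear at infinity $F_\epsilon$ on $T_{-\epsilon}(L) \cup T_\epsilon(L)$ with the explicit form $sf_-(x,u) + D_\epsilon(w)$ near the negative end. By Lemma \ref{lem:isotopy}, $T_{-\epsilon}(L) \cup T_\epsilon(L)$ is compactly Hamiltonian isotopic to $T_{-\epsilon}(\tilde{L}) \cup T_\epsilon(\tilde{L})$ where $\tilde{L} = T_{(1-e^\lambda)w_0}(\varphi'_{-\lambda}(L))$. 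Chekanov's homotopy lifting theorem then transports $F_\epsilon$, after a standard quadratic stabilization $\sum p_iq_i$, to a generating family linear at infinity $F'_\epsilon$ on $T_{-\epsilon}(\tilde{L}) \cup T_\epsilon(\tilde{L})$.

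Now apply $\phi_\lambda$ of Lemma \ref{lem:rescale}. By design, $\phi_\lambda$ inverts the Legendrian transformation leading to $\tilde L$ and conjugates Reeb translations as $T_c \mapsto T_{e^\lambda c}$, so $\phi_\lambda(T_{\pm \epsilon}(\tilde L)) = T_{\pm h}(L)$. Lemma \ref{lem:rescale} then produces the generating family
\begin{equation*}
F_h(x, s, u, v, w) := w_0(s) + e^\lambda\bigl(F'_\epsilon(x, e^{-\lambda}s, u, v, w) - w_0(s)\bigr)
\end{equation*}
on $T_{-h}(L) \cup T_h(L)$, which remains linear at infinity. At the negative end, a direct computation gives $F_h = sf_-(x,u) + e^\lambda D_\epsilon(w) + e^\lambda\sum p_iq_i + \text{const}$. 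Since $e^\lambda\epsilon = h$, the functions $e^\lambda D_\epsilon$ and $D_h$ agree on the fiberwise critical locus $w = \pm 1$, so a compactly supported homotopy of the $w$-factor that only alters the slope at infinity replaces $e^\lambda D_\epsilon$ by $D_h$ without introducing new critical points or changing the Legendrian. A change of the stabilization variables $(p_i, q_i) \mapsto (s^{-1/2}e^{\lambda/2}p_i, s^{-1/2}e^{\lambda/2}q_i)$, cut off to the identity outside a neighborhood of the negative end, then converts $e^\lambda\sum p_iq_i$ into $s\sum p'_iq'_i$. Absorbing the additive constant via a final Reeb shift, $F_h$ attains the prescribed form $s\overline{f}_-(x,u,v) + D_h(w)$ at the negative end, with $\overline{f}_-(x,u,v) = f_-(x,u) + \sum p'_iq'_i$.

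The main technical difficulty lies in this concluding deformation: the change of stabilization variables and the $w$-profile homotopy must be carried out globally on $M \times \bR_{>0} \times \bR^{K+1}$ in a manner consistent with the linear-at-infinity condition and with preservation of the critical locus. Both can be handled by appropriate cutoff functions, exploiting the fact that the required modifications take place away from the critical points of $F_h$ and that all functions involved retain the necessary tameness at infinity.
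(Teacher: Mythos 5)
Your proof follows essentially the same route as the paper's: apply Theorem \ref{thm:double} for a small pushoff $\epsilon$, use Lemma \ref{lem:isotopy} together with the compactly supported homotopy lifting property to transport the generating family to $T_{\pm\epsilon}\circ T_{(1-e^\lambda)w_0}(\varphi'_{-\lambda}(L))$, and then apply the rescaling $\phi_{\ln(h/\epsilon)}$ of Lemma \ref{lem:rescale}; your concluding normalizations (replacing $e^\lambda D_\epsilon$ by $D_h$ and making the stabilization conical in $s$) correspond to the paper's interpolation $sf_-(x,u)+\rho(s)Q(v)+D_h(w)$ near the negative end. The argument is correct and, if anything, slightly more explicit than the paper about the final deformation at the negative end.
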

\begin{proof}
    Since by Lemma \ref{lem:isotopy} there is a compactly supported Hamiltonian isotopy between $T_{-\epsilon}(L) \cup T_\epsilon(L)$ and $T_{-\epsilon} \circ T_{(1 - e^\lambda)w_0}(\varphi_{-\lambda}'(L)) \cup T_\epsilon \circ T_{(1 - e^\lambda)w_0}(\varphi_{-\lambda}'(L))$, the homotopy lifting property of generating families implies that there exists a generating family linear at infinity on $T_{-\epsilon} \circ T_{(1 - e^\lambda)w_0}(\varphi_{-\lambda}'(L)) \cup T_\epsilon \circ T_{(1 - e^\lambda)w_0}(\varphi_{-\lambda}'(L))$. Then consider the contactomorphism $\phi_\lambda: J^1(M \times \bR_{>0}) \xrightarrow{\sim} J^1(M \times \bR_{>0})$ in Lemma \ref{lem:rescale}. We can show that
    $$\phi_{\,\ln(h/\epsilon)}: T_{-\epsilon} \circ T_{(1-{h/\epsilon})w_0}(\varphi_{-\ln(h/\epsilon)}'(L)) \cup T_\epsilon \circ T_{(1-{h/\epsilon})w_0}(\varphi_{-\ln(h/\epsilon)}'(L)) \xrightarrow{\sim} T_{-h}(L) \cup T_h(L).$$
    Thus, by Theorem \ref{thm:double} and Lemma \ref{lem:rescale}, since $w_0(s) = w_{0,-}$ when $s < s_-$, we can get a generating family $F_h$ linear at infinity on $T_{-h}(L) \cup T_h(L)$ such that
    $$(F_h|_{M \times (0, s_-) \times \bR^{K+1}})(x, s, u, v, w) = sf_-(x, u) + Q(v) + D_h(w).$$
    Then we deform the generating family on $M \times (0, s_-) \times \bR^{K+1}$ without changing the critical locus through the homotopy $F_{h,t}|_{M \times (0, s_-) \times \bR^{K+1}} = sf_-(x, u) + \rho(s)Q(v) + D_h(w)$ for some smooth function $\rho: \bR_{>0} \rightarrow \bR_{>0}$ such that $\rho(s) = s$ when $s$ is small and $\rho(s) = 1$ when $s$ is close to $s_-$. Hence we may assume that
    $$(F_h|_{M \times (0, s_-') \times \bR^{K+1}})(x, s, u, v, w) = s(f_-(x, u) + Q(v)) + D_h(w)$$
    for some $s_-' < s_-$. This finishes the proof of the theorem.
\end{proof}
\begin{remark}
    We would like to thank the anonymous referee for pointing out the mistake in the previous version and suggesting revisions on the argument of the above theorem.
\end{remark}

\subsection{Cutting off generating families}\label{sec:cutoff}
    In this section, we will construct the generating family linear at infinity on $L$ from a generating family linear at infinity on $T_{-h}(L) \cup T_h(L)$ by cutting off the domain. The main technique is a combination of Kragh \cite[Section 3]{KraghR2n} and Abouzaid-Courte-Guillermou-Kragh \cite[Section 3.4]{TwistGF}.

    First, we explain how to extend generating families equivariantly on a conical Legendrian.

\begin{lemma}\label{lem:equiv-extend}
    Let $L \hookrightarrow J^1(M \times \bR_{>0})$ be a Legendrian embedding conical outside $J^1(M \times [s_-, s_+])$ whose projection onto $M \times (0, s_-) \cup (s_+, +\infty)$ is proper. Suppose there is a generating family (linear at infinity) $F_0: M \times [s_-', s_+'] \times \bR^k \rightarrow \bR$ for $L \cap J^1(M \times [s_-', s_+'])$ for some $s_-' < s_- < s_+ < s_+'$. Then there is a generating family (linear at infinity) $F: M \times \bR_{>0} \times \bR^k \rightarrow \bR$ for $L$ such that
    \begin{enumerate}
      \item $(F|_{M \times (0, s_-')})(x, s, u) = sf_-(x, u)$;
      \item $(F|_{M \times (s_+', +\infty)})(x, s, u) = sf_+(x, u)$;
      \item $(F|_{M \times [s_-,s_+]})(x, s, u) = F_0(x, s, u)$.
    \end{enumerate}
\end{lemma}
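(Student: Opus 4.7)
The plan is to extend $F_0$ by gluing it, over each of the two overlap regions $s \in [s_-', s_-]$ and $s \in [s_+, s_+']$, to a conical form $sf_\pm(x,u)$ on the outer end, following the cutoff strategy already used in the proof of Proposition \ref{prop:germ-extend}. I treat the negative overlap; the positive overlap is identical. First, I extract $f_-: M \times \bR^k \to \bR$ as a generating family linear at infinity for $\Lambda_-$ from the slice behaviour of $F_0$ just outside the non-conical region (so that $sf_-(x,u)$ generates the conical piece $L \cap J^1(M \times (0, s_-))$). Then both $sf_-(x,u)$ and $F_0(x,s,u)$ are germs of generating families linear at infinity for the same conical Legendrian $L \cap J^1(M \times [s_-', s_-])$.

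I would then apply Lemma \ref{lem:common-domain} to place, after quadratic stabilization, both germs on a common domain $M \times [s_-', s_-] \times \bR^K$ with a single common fiberwise critical locus $\Sigma$. The overlap deformation-retracts onto $\Lambda_-$, so a further stabilization by a quadratic form of appropriate signature allows me to match the two classifying maps in $[\Lambda_-, \bZ \times BO]$. Lemma \ref{lem:germ-htpy} then produces a one-parameter family $G_t(x,s,u)$, $t \in [0,1]$, of germs of generating families linear at infinity for the same Legendrian, all sharing the critical locus $\Sigma$, with $G_0$ the stabilization of $sf_-$ and $G_1$ the stabilization of $F_0$. Since every $G_t$ generates the same Legendrian, $G_t|_\Sigma$ equals the $z$-coordinate of the Legendrian, so $\partial_t G_t \equiv 0$ along $\Sigma$.

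Next I choose a smooth monotone cutoff $\rho: \bR_{>0} \to [0,1]$ with $\rho \equiv 0$ on $(0, s_-']$ and $\rho \equiv 1$ on $[s_-, \infty)$, and set $F(x,s,u) = G_{\rho(s)}(x,s,u)$ on the overlap. The fiberwise critical locus of $F$ equals $\Sigma$ because $\partial_u F = \partial_u G_{\rho(s)}$, and at critical points the chain-rule term $\rho'(s)\, \partial_t G_t|_{t=\rho(s)}$ appearing in $\partial_s F$ vanishes on $\Sigma$ by the observation at the end of Step 2; thus $F$ generates the correct conical Legendrian throughout the overlap. Gluing $F$ with $sf_-(x,u)$ on $(0, s_-']$, with (the stabilization of) $F_0$ on $[s_-, s_+]$, and with the analogous construction on the positive overlap followed by $sf_+(x,u)$ on $(s_+', \infty)$ yields the required extension. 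Linearity at infinity is preserved because both endpoints of the homotopy are linear at infinity and weak linearity is stable under stabilization (Section 2.3).

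The main obstacle is the classifying-map matching step: since $F_0$ is defined across the entire middle chunk $[s_-', s_+']$, its classifying maps over the two overlap regions are globally tied together, so I must choose the stabilizing quadratic forms at the two ends with compatible signatures so as to match both classifying maps simultaneously without breaking the global consistency of the chosen stabilization of $F_0$ over the entire middle region. Once that bookkeeping is set up, the cutoff interpolation is essentially the same device as in the proof of Proposition \ref{prop:germ-extend}, now applied in the $s$-direction rather than on the base $L$.
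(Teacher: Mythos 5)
There is a genuine gap, and it sits exactly where you wave at ``linearity at infinity is preserved.'' Lemma \ref{lem:germ-htpy} (Latour) produces only a homotopy of \emph{germs} $(\overline{U},\overline{f}_t)$: the intermediate functions $G_t$ are defined on a shrinking open neighbourhood of the critical locus with no control whatsoever outside it. Your glued function $F(x,s,u)=G_{\rho(s)}(x,s,u)$ is therefore only a germ over the overlap $M\times[s_-',s_-]$, and extending that germ to all of $\bR^K$ linearly at infinity without creating new fiberwise critical points is precisely the hard problem this part of the paper is built to solve --- it does not follow from the endpoints $G_0,G_1$ being linear at infinity. Separately, the step you identify as the ``main obstacle'' (matching classifying maps by choosing stabilizing quadratic forms) is not where the difficulty lies: if you take $f_\pm(x,u)=s_\pm^{-1}F_0(x,s_\pm,u)$, then $sf_\pm$ and $F_0$ literally agree on the slice $s=s_\pm$, and since the conical overlap deformation retracts onto that slice the classifying maps agree automatically; no stabilization is needed (and stabilizing would in any case break the required identity $F|_{M\times[s_-,s_+]}=F_0$ rather than a stabilization of it).

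The paper's proof is much more elementary and avoids Latour's classification entirely. It sets $f_+(x,u):=s_+^{-1}F_0(x,s_+,u)$, uses the parametric Morse lemma plus a Moser argument to modify $F_0$ by a fiberwise diffeomorphism near the critical locus so that $F_0(x,s,u)=sf_+(x,u)$ there, and then takes the convex combination
\begin{equation*}
F(x,s,u)=\rho(s)F_0(x,s,u)+(1-\rho(s))\,s f_+(x,u).
\end{equation*}
A new critical point would force the fiber gradients $\partial_uF_0(x,s,u)$ and $\partial_uF_0(x,s_+,u)$ to be antiparallel (a vanishing nonnegative combination of two nonzero vectors), which is excluded for $s_+'$ sufficiently close to $s_+$ by continuity together with a uniform lower bound on $|\partial_uF_0|$ outside the Morse neighbourhood, the latter coming from compactness and linearity at infinity. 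Both summands are globally defined and linear at infinity, so the linearity of $F$ at infinity is immediate. If you want to salvage your route, you would have to replace the appeal to Lemma \ref{lem:germ-htpy} by an argument that produces a homotopy of globally defined, linear-at-infinity generating families with fixed critical locus; the paper's slice-and-interpolate construction is effectively the way to do that.
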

\begin{proof}
    We only consider the positive end. The proof for the negative end is similar. Since the projection of the conical ends of $L$ onto $(0, s_-) \cup (s_+, +\infty)$ is proper, we may assume that $M$ is compact. Write $f_+(x, u) = s_+^{-1}F_0(x, s_+, u)$. Then, in a neighbourhood $U$ of the critical locus in $M \times (s_+, +\infty) \times \bR^{K+1}$, by the parametric Morse lemma \cite[Appendix I \& Chapter IV Proposition 5.4]{GuiStern}, we can apply a fiberwise diffeomorphism within the small neighbourhood $U$ such that 
    $$F_0(x, s, u) = s f_+(x, u), \; (x, s, u) \in U.$$
    Indeed, since $F_0(x, s, u) - sf_+(x, u)$ vanishes up to the second order and the Hessians have the same rank and signature, we can connect them by a smooth family of functions and then define the fiberwise diffeomorphism by Moser's argument. Let $\rho: \bR_{>0} \rightarrow \bR$ be a cut-off function such that $\rho|_{(0,s_+]} \equiv 1$ and $\rho|_{[s_+', +\infty)} \equiv 0$. Define
    $$F(x, s, u) = \rho(s)F_0(x, s, u) + (1 - \rho(s))s f_+(x, u).$$
    When $s_+'$ is sufficiently close to $s_+$, we claim that the critical locus of $F$ on $M \times [s_+, s_+']$ necessarily agrees with $F_0$. In fact, since $f_+(x, u) = s_+^{-1}F_0(x, s_+, u)$, the critical points of $F$ are the locus determined by the equation
    $$\rho(s) \partial_u F_0(x, s, u) + (1 - \rho(s))s s_+^{-1} \partial_u F_0(x, s_+, u) = 0.$$
    Since $F_0|_{U} = s f_+|_{U}$, there are no extra critical points inside $U$. For any extra critical point outside $U$, the unit gradient vectors of $\partial_u F_0(x, s, u)$ and $\partial_u F_0(x, s_+, u)$ are necessarily in opposite directions. Note that $M$ is compact and $F_0$ is linear at infinity,  we may assume that the gradient vector field $\partial_u F_0(x, s_+, u)$ is bounded from below outside $U$. Then by choosing $s_+'$ sufficiently close to $s_+$, we know that for any $(x, s, u) \notin U$,
    $$\partial_u F_0(x, s, u)/|\partial_u F_0(x, s, u)| \neq -\partial_u F_0(x, s_+, u)/|\partial_u F_0(x, s_+, u)|,$$
    which implies that there are no extra critical points outside $U$. Therefore, $F$ defines a generating family linear at infinity on $M \times \bR_{>0}$ that is equivariant on both ends.
\end{proof}

    Theorem \ref{thm:largepushoff} ensures the existence of a generating family linear at infinity on $T_{-h}(L) \cup T_h(L)$ for any $h > 0$. When $h \gg 0$ the front projection of $T_{-h}(L) \cap J^1(M \times (0, s_+))$ and $T_h(L) \cap J^1(M \times (0, s_+))$ are separated by a hypersurface $z = 0$. Then by considering the sublevel set $E = F^{-1}((-\infty, 0))$, one will get a generating family on $L$ defined over $E$. Note that a similar approach has appeared in \cite[Section 3]{KraghR2n} and \cite[Section 3.4]{TwistGF}.

    We will assume that the generating family is strongly linear at infinity. Therefore, on each fiber $\bR^{K+1}$ the generating function which defines a parametrized cobordism from $\bR^K$ to $\bR^K$ can be viewed as a parametrized cobordism of compact disks from $\bD^K$ to $\bD^K$. 

    Deducing the following lemma from Theorem \ref{thm:largepushoff} is basically a standard exercise in differential topology. We will restrict the function from $M \times (0, s_+] \times \bR^{K+1} \to M \times (0, s_+]$ to a subbundle $E|_{M \times (0, s_+]} \to M \times (0, s_+]$, whose fiber is an open subset in $\bR^{K+1}$ with smooth boundary. We call such a generating family linear at infinity if it is the restriction of a linear function on $\bR^{K+1}$ outside a compact subset.

\begin{lemma}\label{lem:gf-fiberbundle}
    Let $L \rightarrow J^1(M \times \bR_{>0})$ be a conical Legendrian cobordism between closed Legendrians from $\Lambda_-$ to $\Lambda_+ \hookrightarrow J^1(M)$. Given a generating family linear at infinity $f_-: M \times \bR^{k} \rightarrow \bR$ on $\Lambda_-$, suppose the germ of generating family on $\Lambda_-$ extends to ${L}$. Then there exists a generating family $F: E|_{M \times (0, s_+]} \rightarrow \bR$ for $L \cap J^1(M \times (0, s_+])$ linear at infinity with outward pointing gradient vector field on the boundary, where $\pi: E|_{M \times (0, s_+]} \rightarrow M \times (0, s_+]$ is a smooth fiber bundle.
\end{lemma}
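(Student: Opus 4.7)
The plan is to extract $F$ from the generating family on the doubling $T_{-h}(L) \cup T_h(L)$ provided by Theorem \ref{thm:largepushoff}, by restricting to the sublevel set $\{F_h < 0\}$ and then compensating by the constant shift $+h$. First, apply Theorem \ref{thm:largepushoff} with $h > 0$ to obtain a generating family linear at infinity $F_h: M \times \bR_{>0} \times \bR^{K+1} \to \bR$ for $T_{-h}(L) \cup T_h(L)$ whose restriction to the negative conical end is $s\overline{f}_-(x,u,v) + D_h(w)$. The fiberwise critical locus decomposes into two branches distinguished by $w = \pm 1$: the $w = 1$ branch, whose $z$-value is shifted by $D_h(1) = -h$, recovers $T_{-h}(L)$, while the $w = -1$ branch, shifted by $D_h(-1) = +h$, recovers $T_h(L)$.

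Next, I would choose $h$ large enough to separate the two branches by the hypersurface $\{F_h = 0\}$. Since $L \cap J^1(M \times (0, s_+])$ is compact modulo its conical negative end and the germ of $F_h$ near the critical locus is fixed by the construction, the values of $F_h$ on its critical locus in $M \times (0, s_+]$ are uniformly bounded in terms of $\overline{f}_-$ and the chosen germ extension. For $h \gg 0$ the critical values on the $T_{-h}(L)$ branch therefore lie strictly below $-h/2$ while those on the $T_h(L)$ branch lie strictly above $h/2$. A cut-off in the $w$-direction, combined with the fact that $F_h$ is linear at infinity in $\bR^{K+1}$, then guarantees that $0$ is globally a regular value of $F_h$ on $M \times (0, s_+] \times \bR^{K+1}$.

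Then set $E := F_h^{-1}((-\infty, 0))$ and show that $\pi: E|_{M \times (0, s_+]} \to M \times (0, s_+]$ is a smooth fiber bundle with boundary. Over the negative end each fiber is the explicit open region $\{(u,v,w) : s\overline{f}_-(x,u,v) + D_h(w) < 0\}$, diffeomorphic to a half-space via the $w$-coordinate. For the global fiber bundle structure, apply Ehresmann's theorem: using linearity of $F_h$ at infinity one lifts any vector field on the base to a vector field on $E$ whose flow is complete and preserves $\partial E = F_h^{-1}(0)$, yielding local trivializations that patch with the explicit product structure on the conical end.

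Finally, define $F := F_h + h$ restricted to $E|_{M \times (0, s_+]}$. Because the critical points of $F_h$ contained in $E$ are precisely those on the $T_{-h}(L)$ branch, and a constant shift of $+h$ in a generating family translates the generated Legendrian by $+h$ in the Reeb direction (undoing the $T_{-h}$ pushoff), the function $F$ generates $L \cap J^1(M \times (0, s_+])$. On $\partial E$ we have $\nabla F = \nabla F_h \neq 0$ pointing in the direction of increasing $F_h$, hence outward from $E$; linearity at infinity of $F$ descends fiberwise from that of $F_h$ on $\bR^{K+1}$. I expect the hardest step to be verifying regularity of $\partial E$ and the fiber bundle structure uniformly over the non-conical region $[s_-, s_+]$, where we have no explicit formula for $F_h$; this requires quantitative control of $|\nabla_u F_h|$ from linearity at infinity, taking $h$ large relative to the $C^0$-norm of $F_h$ on the relevant compact subset so that the separation by $\{F_h = 0\}$ is stable there.
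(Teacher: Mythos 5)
Your proposal is correct and follows essentially the same route as the paper: invoke Theorem \ref{thm:largepushoff}, take $h$ large enough that the two fronts (equivalently, the two families of fiberwise critical values) are separated by $z=0$, set $E = F_h^{-1}((-\infty,0))$, and check the fiber bundle structure. The only cosmetic difference is that the paper verifies local triviality of $\pi: E \to M \times (0,s_+]$ by exhibiting nearby level sets $F_{(x,s)}^{-1}(0)$ as graphical hypersurfaces in a tubular neighbourhood of $F_{(x_0,s_0)}^{-1}(0)$ rather than by an Ehresmann-type lifted-vector-field argument, and your explicit $+h$ shift is consistent with how the paper uses the lemma afterwards.
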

\begin{proof}
    Let $h > 0$ be sufficiently large. By Theorem \ref{thm:largepushoff}, we obtain a generating family linear at infinity $F: M \times \bR_{>0} \times \bR^{K+1} \rightarrow \bR$ on $T_{-h}(L) \cup T_h(L)$ such that
    $$(F|_{M \times (0, s_-] \times \bR^{K+1}})(x, s, u, v, w) = s\overline{f}_-(x, u, v) + D_h(w).$$
    When $h > 0$ is sufficiently large, we may assume that the front projection of $T_{-h}(L)$ and $T_h(L)$ inside $M \times \bR \times (0, s_+']$ are separated by a hyperplane $z = 0$. Let $E = F^{-1}((-\infty, 0))$. Then we obtain a generating family $F: E|_{M \times (0,s_+]} \rightarrow \bR$ for $L \cap J^1(M \times (0, s_+])$ linear at infinity with outward pointing gradient vector field on $\partial E|_{M \times (0, s_+]} = F^{-1}(0)|_{M \times (0, s_+]}$.

    We prove that $\pi: E|_{(0, s_+]} \rightarrow M \times (0, s_+]$ is a smooth manifold bundle. For any $(x_0, s_0) \in M \times (0, s_+]$, we show that there exists a neighbourhood $D_{(x_0, s_0)} \subset M \times (0, s_+]$ such that $E|_{D_{(x_0, s_0)}}$ is isomorphic to $E_{(x_0, s_0)} \times D_{(x_0, s_0)}$ as smooth manifold bundles. Write $F_{(x, s)}(u, v, w) = F(x, s, u, v, w)$. Since $z = 0$ is a regular value of $F_{(x_0, s_0)}$ which is linear at infinity, there is a tubular neighbourhood of $F_{(x_0, s_0)}^{-1}(0)$ that is diffeomorphic to
    $$F_{(x_0, s_0)}^{-1}(0) \times (-\epsilon, \epsilon) \cong F_{(x_0, s_0)}^{-1}((-\epsilon, \epsilon)) \subset \bR^{K+1}.$$
    When $0$ is a regular value, $F_{(x, s)}^{-1}(0)$ defines a smooth family of hypersurfaces in $\bR^{K+1}$. Hence when $\delta > 0$ is sufficiently small, for $(x, s) \in D_{(x_0, s_0)}(\delta)$, $F_{(x, s)}^{-1}(0)$ is a graphical hypersurface inside the tubular neighbourhood $F_{(x_0, s_0)}^{-1}(0) \times (-\epsilon, \epsilon)$. Then, by reparametrizing the sublevel set in the neighbourhood $F_{(x_0, s_0)}^{-1}((-\epsilon, \epsilon)) \cap F_{(x,s)}^{-1}((-\infty, 0))$, we can get a local trivialization
    $$(F|_{D_{(x_0, s_0)} \times \bR^{K+1}})^{-1}(-\infty, 0) \cong F_{(x_0, s_0)}^{-1}((-\infty, 0)) \times D_{(x_0, s_0)},$$
    and hence locally $E|_{D_{(x_0, s_0)}} \cong E_{(x_0, s_0)} \times D_{(x_0, s_0)}$ as smooth fiber bundles.
\end{proof}

    Moreover, it is standard in Morse theory to deduce that the fiber of the smooth manifold bundle is $\bR^{K+1}$. The natural question then, is whether the fiber bundle $\pi: E \rightarrow M \times \bR_{>0}$ is the trivial bundle $\pi_{M \times \bR_{>0}}: M \times \bR_{>0} \times \bR^{K+1} \rightarrow M \times \bR_{>0}$. 

    In the case we are working on right now, we have a trivialization near the negative end $\pi: E|_{M \times (0, s_-)} \rightarrow M \times (0, s_-)$ for free from the given generating function we start with, which will extend to a trivialization on the whole fiber bundle. Note that a similar approach is used by Kragh \cite[Section 3]{KraghR2n} to construct a generating family quadratic at infinity for exact Lagrangians in $T^*\bD^n$.

\begin{theorem}
    Let $L \hookrightarrow J^1(M \times \bR_{>0})$ be an embedded conical Legendrian cobordism with no Reeb chords between closed Legendrians from $\Lambda_-$ to $\Lambda_+ \hookrightarrow J^1(M)$. Given a generating family linear at infinity $f_-: M \times \bR^{k} \rightarrow \bR$ on $\Lambda_-$, suppose the germ of generating family for $\Lambda_-$ extends to ${L}$. Then there exists a generating family linear at infinity $F: M \times \bR_{>0} \times \bR^{K+1} \rightarrow \bR$ on $L$ such that
    $$(F|_{M \times (0, s_-) \times \bR^{K+1}})(x, s, u, v, w) = s\overline{f}_-(x, u, v, w),$$
    where $\overline{f}_-(x, u, v, w)$ is a stabilization of the function $f_-(x, u)$.
\end{theorem}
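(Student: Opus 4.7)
My plan is to upgrade the generating family $F : E|_{M \times (0, s_+]} \to \bR$ from Lemma~\ref{lem:gf-fiberbundle} to one defined on the full trivial bundle $M \times \bR_{>0} \times \bR^{K+1}$ with the prescribed form at the negative end. From the negative-end expression $F = s\overline{f}_-(x, u, v) + D_h(w)$, the connected component of $\{F < 0\}$ containing the $w \approx +1$ critical cluster is fiberwise diffeomorphic to $\bR^{K+1}$, yielding an explicit trivialization $\Psi_- : M \times (0, s_-) \times \bR^{K+1} \xrightarrow{\sim} E|_{M \times (0, s_-)}$. A parametric Morse argument using the outward-gradient condition on $\partial E$ and linearity at infinity identifies every fiber of $\pi : E|_{M \times (0, s_+]} \to M \times (0, s_+]$ with $\bR^{K+1}$.

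The main technical step is to extend $\Psi_-$ to a global trivialization $\Psi : M \times (0, s_+] \times \bR^{K+1} \xrightarrow{\sim} E|_{M \times (0, s_+]}$. Abstract trivializability of $\pi$ follows because $M \times (0, s_+]$ deformation retracts onto $M \times (0, s_-)$, on which the bundle is already trivialized by $\Psi_-$, so the classifying map $M \times (0, s_+] \to B\mathrm{Diff}(\bR^{K+1})$ is null-homotopic. To realize this as an extension of $\Psi_-$ itself, I lift the base vector field $\partial_s$ to a smooth, complete vector field $X$ on $E$ coinciding with $(\Psi_-)_* \partial_s$ near the negative end; integrating $X$ produces $\Psi$. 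Completeness of $X$ requires care because the fibers are noncompact; one patches local lifts via a partition of unity on $M$, ensuring tangency to sublevel sets of $F$ near fiber infinity so that the linear-at-infinity structure of $F$ controls the growth.

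With $F \circ \Psi$ on $M \times (0, s_+] \times \bR^{K+1}$ in hand, Lemma~\ref{lem:equiv-extend} together with the conical structure of $L$ at the positive end extends it to a generating family on $M \times \bR_{>0} \times \bR^{K+1}$ linear at infinity. At the negative end, this generating family takes the form $s(f_-(x,u) + Q_1(v)) + D_h(w)$; using the homotopy lifting property of generating families (Chekanov-Chaperon) applied to a contact isotopy supported in the $w$-direction that replaces $D_h(w)$ by a single-critical-point quadratic form $sQ_2(w)$, I normalize the negative-end expression to $s\overline{f}_-(x, u, v, w) = s(f_-(x,u) + Q_1(v) + Q_2(w))$. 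The main obstacle throughout is the extension of the trivialization in the middle step: smooth $\bR^{K+1}$-bundles are not automatically trivial, and the completeness of the lifting vector field must be secured by carefully exploiting the linear-at-infinity control on $F$ inherited from the doubling construction.
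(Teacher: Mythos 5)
Your overall architecture---restrict the doubled generating family of Lemma~\ref{lem:gf-fiberbundle} to the sublevel bundle $E = F^{-1}((-\infty,0))$, trivialize $E$ starting from the explicit product form at the negative end and propagating in the $s$-direction, then extend equivariantly via Lemma~\ref{lem:equiv-extend}---is the same as the paper's. But your final normalization step contains a genuine error. You propose to convert the negative-end expression $s(f_-(x,u)+Q_1(v))+D_h(w)$ into $s(f_-(x,u)+Q_1(v)+Q_2(w))$ by ``the homotopy lifting property applied to a contact isotopy supported in the $w$-direction.'' There is no such contact isotopy: $w$ is an auxiliary fiber variable of the generating family, not a coordinate on $J^1(M\times\bR_{>0})$, and the Chekanov--Chaperon lifting property only produces generating families for the \emph{image} of the Legendrian under an ambient contact isotopy; it cannot alter the fiberwise shape of a generating family while keeping the Legendrian fixed. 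Moreover, after restricting to $E$ and composing with your trivialization $\Psi$, the function near the negative end is not literally $s\overline{f}_-(x,u,v)+D_h(w)$ on all of $\bR^{K+1}$, but rather its restriction to the fiber of $E$ (where only the $w=1$ critical points survive) pulled back by $\Psi_-$; what has to be shown is that $\Psi_-$ can be chosen so that this pullback is a stabilization of $sf_-$. The paper does exactly this by building the trivialization out of the subbundles $H$ and $W$ of descending manifolds, applying the parametric Morse lemma (Moser's argument) to normalize $F$ on $W$ to $s\overline{f}_-(x,u,v)+s(w-1)^2$, and then using that the gradient flow on $E\setminus W$ is complete and free of critical points to push the normalization to the whole fiber. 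Your proposal skips this step, and the tool you substitute for it does not apply.

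A secondary issue is your extension of $\Psi_-$ by integrating a lift $X$ of $\partial_s$: this requires completeness of $X$ on the noncompact fibers, which you flag but do not secure---``tangency to sublevel sets near fiber infinity'' does not by itself bound escape to infinity in finite time, since patching local horizontal lifts by a partition of unity introduces vertical terms that need not be bounded. Since Lemma~\ref{lem:gf-fiberbundle} already gives local triviality, and (using properness of the conical ends, as in the proof of Lemma~\ref{lem:equiv-extend}) the relevant base $M\times[s_-',s_+']$ may be taken compact, the covering homotopy theorem for fiber bundles extends the trivialization from $M\times[s_-',s_-]$ to $M\times[s_-',s_+']$ with no vector-field lifting at all; this is the route the paper takes, and I would recommend it over repairing the completeness argument.
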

\begin{proof}
    Let $s_-' < s_- < s_+ < s_+'$. Using Lemma \ref{lem:gf-fiberbundle}, we can get a a generating family $F: E|_{M \times [s_-', s_+']} \rightarrow \bR$ for $L \cap J^1(M \times [s_-', s_+'])$. We show that $\pi: E|_{[s_-', s_+']} \rightarrow M \times [s_-', s_+']$ is a trivial manifold bundle whose fiber is diffeomorphic to $\bR^{K+1}$. Since $M \times [s_-', s_-] \hookrightarrow M \times [s_-', s_+']$ is a homotopy equivalence, it suffices to show that the fiber bundle $\pi: E|_{M \times [s_-', s_-]} \rightarrow M \times [s_-', s_-]$ is a trivial fiber bundle with fiber $\bR^{K+1}$.

    We consider $z \gg 0$ such that $s \overline{f}_-|_{M \times [s_-', s_-]}$ is linear when $s \overline{f}_-(x, u, v) \geq z$. Then define $H_{M \times [s_-', s_-]} = (s \overline{f}_-|_{M \times [s_-', s_-]})^{-1}((-\infty, z))$ to be a fiber bundle containing all the descending manifolds of $s \overline{f}_{-,x}(u, v) = s \overline{f}_-(x, u, v)$ for $(x, s) \in M \times [s_-', s_-]$, whose fibers are half planes $\bR^K_{<0}$. Since the gradient vector field of $s \overline{f}_{-,x}(u, v)$ on the complement of $H_{M \times [s_-', s_-]}$ is linear with no critical points, $H_{M \times [s_-', s_-]} \to M \times [s_-', s_-]$ is a trivial fiber bundle. 

    Since the critical points of $D_h(w)$ are $w = \pm 1$, we know that $H_{M \times [s_-', s_-]} \times 1 \subseteq \bR^{K+1}$ contains all the descending manifolds of $F_{(x,s)}(u, v, w) = s f_{-}(x, u, v) + D_h(w) + h$ for the critical points on $w = 1$, which generates the Legendrian $L$. Consider $z' \gg 0$ such that $F|_{M \times [s_-', s_-]}$ is linear when $F(x, s, u, v, w) \leq -z'$. Define $W_{M \times [s_-', s_-]} \subseteq M \times [s_-', s_-] \times \bR^{K+1}$ to be a neighbourhood of $(H_{M \times [s_-', s_-]} \times 1) \cup (F|_{(M \times [s_-', s_-]})^{-1}((-\infty, -z'))$ whose boundary is transverse to the gradient vector field. Since $H_{M \times [s_-', s_-]} \to M \times [s_-', s_-]$ is a trivial fiber bundle whose fibers are $k$-half planes, it follows that $W_{M \times [s_-', s_-]} \to M \times [s_-', s_-]$ is also a trivial fiber bundle whose fibers are $(k+1)$-half planes. Moreover, by the parametric Morse lemma \cite[Appendix I \& Chapter IV Proposition 5.4]{GuiStern}, after applying a fiberwise diffeomorphism in $W_{M \times [s_-', s_-]}$ without changing the critical locus, we may assume that
    $$(F|_{W_{M \times [s_-', s_-]}})(x, s, u, v, w) = s\overline{f}_-(x, u, v) + s (w-1)^2$$
    is a generating family for $L$. Indeed, since the difference of $s\overline{f}_-(x, u, v) + h(w^3 - 2w)/2 + h$ and $s\overline{f}_-(x, u, v) + s (w-1)^2$ vanish up to the second order and the Hessians have the same rank and signature, we can consider a deformation between the functions in the open subset $W_{M \times [s_-', s_-]}$ and construct the fiberwise diffeomorphism by Moser's argument. 

    On the other hand, each fiber of $E|_{M \times [s_-', s_-]} \backslash W_{M \times [s_-', s_-]}$  is a trivial cobordism from the boundary of the plane $\bR^K$ to $\bR^K$ since the gradient vector field of the family of functions is complete with no critical points. Hence we conclude that $E|_{M \times [s_-', s_-]} \rightarrow M \times [s_-', s_-]$ is a trivial fiber bundle with fiber being the $(k+1)$-half plane. Since the gradient vector field of $F_{(x,s)}$ is outward pointing on the boundary $\partial E|_{M \times [s_-', s_+']}$ whose fibers are diffeomorphic to $\bR^K$, we can extend the vector field so that it is linear at infinity. Moreover, since the gradient vector field on $E|_{M \times [s_-', s_-]} \backslash W_{M \times [s_-', s_-]}$ is complete with no critical points, after applying a fiberwise diffeomorphism without changing the critical locus, we may assume that
    $$(F|_{M \times [s_-', s_-] \times \bR^{K+1}})(x, s, u, v, w) = s\overline{f}_-(x, u, v) + s w^2$$
    is a generating family on $L$. Thus we trivialize $\pi: E|_{M \times [s_-', s_+']} \rightarrow M \times [s_-', s_+']$ by extending the trivialization from $M \times [s_-', s_-]$ to $M \times [s_-', s_+']$. Finally, we extend the generating family equivariantly from $M \times [s_-', s_+']$ to $M \times \bR_{>0}$ using Lemma \ref{lem:equiv-extend}.
\end{proof}

\begin{figure}
  \centering
  \includegraphics[width=0.7\textwidth]{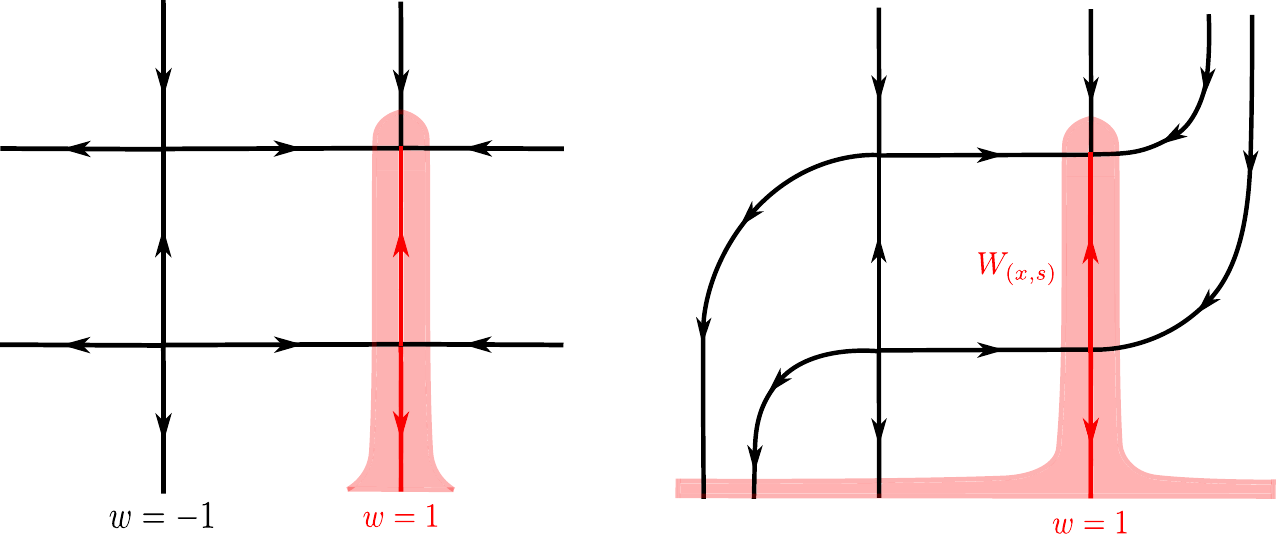}\\
  \caption{The negative gradient vector field of the generating family $F(x, s, u, v, w)$. The red region on $w = 1$ is the fiber of the bundle $H_{M \times [s_-', s_-]}$ and the pink region around it is the fiber of the bundle $W_{M \times [s_-', s_-]}$. The function on the left hand side is $s f_-(x, u, v) + D_h(w)$ while the function on the right hand side is the linear at infinity modification.}\label{fig:cutoff}
\end{figure}

\appendix
\section{The Case of Legendrian Links}\label{sec:knot}
    We write down in detail the necessary sufficient condition of extending generating families in the case of 1-dimensional Legendrian links.

\begin{definition}
    Let $\Lambda \subset J^1M$ be a Legendrian with vanishing Maslov class and generic front projection, whose singular locus under the front projection is $\Lambda_\text{sing}$. A Maslov potential is a map $d: \Lambda \backslash \Lambda_\text{sing} \rightarrow \bZ$ such that for any generic curve $\gamma: [0, 1] \rightarrow \Lambda$, $d(\gamma(1)) - d(\gamma(0))$ equals the number of upward going cusps minus the number of downward going cusps.

    Let $f: M \times \bR^N \rightarrow \bR$ be a generating family for the Legendrian $\Lambda \subset J^1M$ with generic front projection, whose singular locus under the front projection is denoted by $\Lambda_\text{sing}$. The Maslov potential associated to the generating family $f$ is the map $d(f): \Lambda \backslash \Lambda_\text{sing} \rightarrow \bZ$ that sends a point to the Morse index of the generating family at the point.
\end{definition}

    For a Legendrian $\Lambda \subset J^1M$, by fixing a reference Maslov potential $d_0: \Lambda \backslash \Lambda_\text{sing} \rightarrow \bZ$, any Maslov potential corresponds uniquely to a continuous map $\Lambda \rightarrow \bZ$. If we moreover subtract the Maslov potential by the potential at the base point $p_0 \in \Lambda$, then the potential associated to a generating family is invariant under stabilizations, and the potential corresponds uniquely to a pointed map $\Lambda \rightarrow \bZ$. This agrees with the discrete part of the classifying map $\Delta(f): \Lambda \rightarrow \bZ \times BO$.

\begin{proposition}
    Let $L \hookrightarrow T^*(\bR^1 \times \bR_{>0})$ or $T^*(S^1 \times \bR_{>0})$ be an exact Lagrangian cobordism between Legendrian links in $J^1\bR^1$ or $J^1S^1$ from $\Lambda_-$ to $\Lambda_+$ and $\Lambda_+ \neq \varnothing$. Let $f_-$ be a generating family of $\Lambda_-$ linear at infinity. Then $f_-$ extends to a generating family of $L$ linear at infinity up to stabilizations if and only if the Maslov class $\mu(L) \in H^1(L; \bZ)$ is trivial and Maslov potential associated to $f_-$ extends to $L$.
\end{proposition}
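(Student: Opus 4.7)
The plan is to invoke the main Theorem and translate its two formal obstructions --- nullhomotopy of the stable Lagrangian Gauss map $G$ and extendability of the associated nullhomotopy --- into concrete conditions involving only the Maslov class and the Maslov potential, using elementary obstruction theory in this $1$-dimensional setting.

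First I would observe that, since $\Lambda_+ \neq \varnothing$, the $2$-dimensional cobordism $L$ is an open surface with a nontrivial cylindrical end at the positive side. Hence $L$ deformation retracts onto a compact surface with nonempty boundary, which is in turn homotopy equivalent to a $1$-dimensional CW complex (a graph). Consequently, pointed homotopy classes from $L$ into any target $X$ are controlled entirely by $\pi_0(X)$ and $\pi_1(X)$, and relative extension problems from $\Lambda_-$ to $L$ only have obstructions in those degrees.

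Applying this with $X = U/O$, and using $\pi_1(U/O) \cong \bZ$ generated by the Maslov class, I get $[L, U/O]_\ast \cong H^1(L; \bZ)$ with the class of $G$ equal to $\mu(L)$; so $G$ is nullhomotopic iff $\mu(L) = 0$. For the nullhomotopy extension I pass to $\Omega(U/O) \simeq \bZ \times BO$ and note that on a $1$-complex $Y$ the set $[Y, \bZ \times BO]_\ast$ has an $H^0(Y; \bZ)$-piece (from $\pi_0 = \bZ$) and a possible $H^1(Y; \bZ/2)$-piece (from $\pi_1(BO) = \pi_0(O) = \bZ/2$). Directly from Definition \ref{def:classify-intro}, the $H^0$-component of $\Delta(f_-)$ records, on each component of $\Lambda_-$, the fiberwise Morse index of $f_-$ relative to the base index, which is precisely the Maslov potential of $f_-$. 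Extending this across $L$ is exactly the Maslov potential extension condition in the proposition.

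The hard part, which I would verify by unwinding Definition \ref{def:classify-intro} circle-by-circle on $\Lambda_-$, is to show that the $H^1(\,\cdot\,;\bZ/2)$-piece of $\Delta(f_-)$ does not contribute a further obstruction. The plan is to argue that in this $1$-dimensional setting the $\bZ/2$-component is determined by (in fact, is the mod $2$ reduction of) the integer-valued Maslov potential, so that any integer extension of the Maslov potential from $\Lambda_-$ to $L$ tautologically produces a compatible extension of the whole classifying map $\Delta(f_-)\colon \Lambda_- \to \bZ \times BO$. Once this is checked, the main Theorem produces the desired extension of $f_-$ to a generating family linear at infinity on $L$. The converse direction is immediate, since any generating family on $L$ restricts to one on $\Lambda_-$, induces the required nullhomotopy of $G$ on $L$, and extends the Maslov potential by construction.
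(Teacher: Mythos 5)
Your overall strategy matches the paper's: reduce both formal obstructions of the main theorem to obstruction theory on a $1$-complex, identify the Gauss-map obstruction with $\mu(L)$ via $\pi_1(U/O)\cong\bZ$, and identify the $\pi_0(\bZ\times BO)=\bZ$ part of $\Delta(f_-)$ with the Maslov potential. The first two paragraphs and the converse direction are essentially the paper's argument.

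The problem is your third paragraph. The ``hard part'' you isolate --- showing that the $H^1(\,\cdot\,;\bZ/2)$ piece coming from $\pi_1(BO)=\bZ/2$ contributes no further obstruction --- rests on conflating the \emph{classification} of maps into $\bZ\times BO$ with the \emph{extension} problem. Since $\Lambda_+\neq\varnothing$, the pair $(L,\Lambda_-)$ has the homotopy type of a \emph{relative} $1$-dimensional CW complex: $L$ is obtained from $\Lambda_-$ up to homotopy by attaching cells of dimension at most $1$. Extending a map into a space $X$ over relative $1$-cells requires only that the two endpoint images lie in the same path component of $X$; the obstruction valued in $\pi_1(X)$ would only appear over relative $2$-cells, of which there are none. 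Hence, with $X=\bZ\times BO$ and $BO$ path connected, the only obstruction is the $\pi_0$-matching, i.e.\ the Maslov potential --- this is exactly how the paper concludes. The $\bZ/2$ data governs how many homotopy classes of extensions exist, not whether one exists, so there is nothing to check.

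Moreover, the specific claim you propose to prove in order to dispose of the ``hard part'' --- that the $H^1(\Lambda_-;\bZ/2)$-component of $\Delta(f_-)$ is the mod $2$ reduction of the Maslov potential --- is unsubstantiated and should not be relied upon: on a Legendrian link one has $[\Lambda_-,\bZ\times BO]\cong H^0(\Lambda_-;\bZ)\times H^1(\Lambda_-;\bZ/2)$, and by Latour's classification the $\bZ/2$ factor genuinely distinguishes stable equivalence classes of germs with the same Maslov potential, so it is not in general a function of the potential. Delete that step and replace it with the relative-$1$-complex observation above, and your proof becomes the paper's.
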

\begin{proof}
    First, we show that the Lagrangian Gauss map $L \rightarrow U/O$ is trivial if and only if the Maslov class vanishes, i.e.~$\mu(L) = 0$. By Gromov's theorem, there are no closed exact Lagrangians in $T^*(\bR^1 \times \bR_{>0})$ or $T^*(S^1 \times \bR_{>0})$. Hence $L$ is a surface with boundary and has the homotopy type of a 1-dimensional CW complex. The Lagrangian Gauss map is nontrivial if and only if it is trivial along any 1-cycle $S^1 \rightarrow L \rightarrow U/O$, in other words, the Maslov class vanishes.

    Next, we show that $\Delta(f_-): \Lambda_- \rightarrow \bZ \times BO$ extends to $L$ if and only if the Maslov potential defined $d(f_-): \Lambda_- \rightarrow \bZ$ extends to $L$. When $\Lambda_+ \neq \varnothing$, $(L, \Lambda_-)$ has the homotopy type of a relative 1-dimensional CW complex, and the existence of an extension of $\Delta(f_-)$ from $\Lambda_-$ to $L$ is equivalent to the existence of an extension of $\Delta(f_-)$ along any arc $(D^1, S^0) \rightarrow (L, \Lambda_-)$. However, we know that $BO$ is path connected. Hence this is equivalent to the existence of an extension of the Maslov potential.
\end{proof}

\begin{proposition}
    Let $L \hookrightarrow T^*(\bR^1 \times \bR_{>0})$ or $T^*(S^1 \times \bR_{>0})$ be an exact Lagrangian cap between Legendrian links in $J^1\bR^1$ or $J^1S^1$ from $\Lambda_-$ to $\Lambda_+ = \varnothing$. Then $\Lambda_-$ does not admit a generating family linear at infinity.
\end{proposition}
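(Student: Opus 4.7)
The plan is to argue by contradiction: assume $\Lambda_- \neq \varnothing$ admits a generating family $f_-$ linear at infinity, and derive a contradiction from the existence of the cap $L$. The strategy is to combine the doubling construction of Theorem~\ref{thm:largepushoff} with the emptiness of $\Lambda_+$ to produce incompatible topology at the two ends of a single generating family.

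Concretely, I would first use the conical extension $sf_-(x,u)$ together with Theorem~\ref{thm:largepushoff} to produce a generating family linear at infinity $F_h\colon M \times \bR_{>0} \times \bR^{K+1} \to \bR$ on the doubled Legendrian $T_{-h}(L) \cup T_h(L)$, satisfying $F_h|_{s<s_-}(x,s,u,v,w) = s\overline{f}_-(x,u,v) + D_h(w)$. Because $\Lambda_+ = \varnothing$, the doubled positive end $T_{-h}(\Lambda_+) \cup T_h(\Lambda_+)$ is also empty, so for $s > s_+$ the fiber function $F_{h,(x,s)}\colon \bR^{K+1} \to \bR$ is linear at infinity with no critical points, so each sublevel set is a contractible half-space. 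At $s \in (0,s_-)$ the fiber function $s\overline{f}_-(x,u,v)+D_h(w)$ generates the non-empty doubled Legendrian $T_{-h}(\Lambda_-) \cup T_h(\Lambda_-)$, so the relative sublevel-set cohomology in this regime carries a non-trivial class (for instance the class coming from $H^0(\Lambda_-) \neq 0$). A continuation argument along $s \in \bR_{>0}$, using that $F_h$ is a single globally defined linear-at-infinity function on the product $M \times \bR_{>0} \times \bR^{K+1}$, identifies the sublevel-set cohomologies at the two ends, forcing a non-trivial group to equal a trivial one.

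The subtle point is that invoking Theorem~\ref{thm:largepushoff} requires $f_-$ to extend to a germ of generating family on $L$, which by Proposition~\ref{prop:germ-extend} demands that $\mu(L) \in H^1(L;\bZ)$ vanish and that the Maslov potential of $f_-$ extend to $L$. For a compact oriented cap with boundary $\Lambda_-$ these can fail; in that case I would argue using the same 1-dimensional CW-complex reasoning as in the preceding proposition, combined with the doubling symmetry (which cancels the Maslov contributions on the two copies), that the doubled obstructions do vanish, so the doubled construction still goes through even when the single-copy extension fails. The main obstacle will be making the continuation argument rigorous across the non-conical region $s \in [s_-,s_+]$: there the critical locus of $F_h$ reflects the topology of the compact doubled Lagrangian, and one must verify that its finite-rank contribution to the sublevel-set cohomology cannot account for the passage from a non-trivial group at $s \to 0$ to a trivial group at $s \to \infty$, using crucially that $\Lambda_+ = \varnothing$.
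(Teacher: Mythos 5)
Your approach diverges completely from the paper's, and it has two genuine gaps. First, the entry point fails: Theorem \ref{thm:largepushoff} (and Theorem \ref{thm:double} behind it) produces a generating family on the double $T_{-h}(L)\cup T_h(L)$ \emph{only} under the hypothesis that $f_-$ extends to a germ of generating family on the single copy $L$ --- the doubling is performed by adding $D_h(w)$ to a germ already defined on a neighbourhood of $L$, so there is no version of the construction that ``works directly on the double'' when the single-copy obstructions (triviality of the Gauss map on $L$ and extension of the classifying map, Proposition \ref{prop:germ-extend}) fail. Your suggestion that ``the doubled obstructions cancel'' is not supported by any statement in the paper, and even if it were true it would not let you run the construction. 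Worse, if the germ simply does not extend to $L$, your argument says nothing at all about whether $\Lambda_-$ admits a generating family, so the contradiction never gets off the ground in that case. Second, even when the construction applies, the continuation argument does not yield a contradiction as stated: the non-conical region $s\in[s_-,s_+]$ contributes a finite-rank critical locus, and Morse theory places no obstruction to a finite-rank sublevel-set cohomology group (such as the class you extract from $H^0(\Lambda_-)$) being killed by finitely many handle attachments before reaching the empty positive end. To make such an argument work one needs a structural constraint that survives the compact part --- e.g.\ Sabloff duality for generating family homology forcing a fundamental class in top degree, combined with the cobordism long exact sequence --- none of which you invoke.

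The paper's actual proof is entirely different and much shorter: it is a citation chain. Dimitroglou Rizell's theorem shows that a Legendrian admitting an exact Lagrangian cap has no augmentation over $\bZ/2\bZ$; for links in $J^1\bR$ or $J^1S^1$, augmentations are equivalent to graded normal rulings by Fuchs and Sabloff; and by Fuchs--Rutherford a generating family linear at infinity produces a graded normal ruling. The essential input is therefore pseudoholomorphic-curve theory (via the augmentation obstruction), not a Morse-theoretic continuation; your sketch would need substantial additional structure (duality, or an independent proof of the ruling/augmentation obstruction) before it could replace it.
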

\begin{proof}
    By Dimitroglou Rizell's result, we know that when $L$ is a Lagrangian cap, $\Lambda_-$ cannot have any augmentation over $\bZ/2\bZ$ \cite{RizellCap}. Then, for Legendrian links in $J^1\bR^1$ or $J^1S^1$, by Fuchs and Sabloff's result, $\Lambda_-$ cannot have any graded normal ruling \cite{FuchsRuling,SabRuling}, and thus by Fuchs-Rutherford's result cannot have any generating family linear at infinity \cite{FuchsRutherGF}.
\end{proof}

\bibliographystyle{amsplain}
\bibliography{generatingfamily}
\end{document}